\newcommand{\CM}{Cohen-Macaulay}
\newcommand{\wrt}{with respect to}
\newcommand{\n}{\mathfrak{n} }
\newcommand{\m}{\mathfrak{m} }
\newcommand{\q}{\mathfrak{q} }
\newcommand{\rr}{\mathfrak{r} }
\newcommand{\bx}{\mathbf{x} }
\newcommand{\K}{\mathbf{K} }
\newcommand{\F}{\mathcal{F} }
\newcommand{\C}{\mathcal{C} }
\newcommand{\Z}{\mathbb{Z} }
\newcommand{\Q}{\mathbb{Q} }
\newcommand{\rt}{\rightarrow}
\newcommand{\ov}{\overline}
\newcommand{\gldim}{\operatorname{gldim}}
\newcommand{\End}{\operatorname{End}}
\newcommand{\pdim}{\operatorname{projdim}}
\newcommand{\rad}{\operatorname{rad}}
\newcommand{\htt}{\operatorname{ht}}
\newcommand{\Hom}{\operatorname{Hom}}
\newcommand{\Tor}{\operatorname{Tor}}
\theoremstyle{plain}
\newtheorem{theorem}{Theorem}[section]
\newtheorem{corollary}[theorem]{Corollary}
\newtheorem{lemma}[theorem]{Lemma}
\newtheorem{proposition}[theorem]{Proposition}
\theoremstyle{definition}
\newtheorem{definition}[theorem]{Definition}
\theoremstyle{remark}
\begin{document}

\title[Crepant]{Examples of NON-COMMUTATIVE CREPANT RESOLUTIONS Of Cohen Macaulay normal  domains}
\author{Tony~J.~Puthenpurakal}
\date{\today}
\address{Department of Mathematics, IIT Bombay, Powai, Mumbai 400 076}

\email{tputhen@math.iitb.ac.in}
\subjclass{14B05, 14A22, 14E15, 13C14, 16E10 }
\keywords{non-commutative crepant resolutions, normal domains, Hensel rings}
 \begin{abstract}
Let $A$ be a \CM \  normal   domain. A   non commutative crepant  resolution (NCCR) of $A$ is an $A$-algebra $\Gamma$ of the form $\Gamma = \End_A(M)$, where $M$ is a reflexive
$A$-module, $\Gamma$ is maximal \CM \  as an $A$-module and 
$\gldim(\Gamma)_P = \dim A_P $ for all primes $P$ of $A$.
We give bountiful examples of  equi-characteristic \CM \   normal local domains and  mixed characteristic \CM \   normal local domains  having NCCR. We also give plentiful examples of affine \CM  \  normal domains having NCCR.
\end{abstract}
 \maketitle
\section{introduction}
Let $A$ be a \CM \  normal  domain. Van den Bergh \cite{van} defined   \textit{a non-commutative crepant resolution }of $A$ (henceforth NCCR) to be  an $A$-algebra $\Gamma$ of the form $\Gamma = \End_A(M)$, where $M$ is a reflexive
$A$-module, $\Gamma$ is maximal \CM \  as an $A$-module and $\gldim(\Gamma)_P = \dim A_P $ for all primes $P$ of $A$.  We should remark that Van den Bergh only defined this for Gorenstein normal domains as this has applications in algebraic geometry. However there are many algebraic reasons for consider this generalization, see \cite{IW}. For a nice survey on this topic see \cite{Leu}.  In general,
it is subtle to construct NCCR's.  In this paper we give bountiful examples of \CM \ normal domains having a NCCR.

\s \label{Mix CM}\emph{Mixed Characteristic case:} We now outline in brief our construction. Recall $f \in \Z[X_1,\ldots,X_n]$ has content $1$ if $1$ belongs to the ideal generated by the coefficients of $f$. We say $f$ is $\Q$-smooth if $\Q[X_1,\ldots,X_n]/(f)$ is a regular ring. For a prime $p$ we say $f$ is smooth mod-$p$ if $\Z_p[X_1,\ldots,X_n]/(f)$ is a regular ring. It is well-known  that if $f$ is $\Q$-smooth then is smooth mod-$p$  for infinitely many primes $p$. Our result is:
\begin{theorem}\label{Ex-1}
Let $(A,\m)$ be an excellent normal \CM \  local domain of mixed characteristic with perfect residue field $k = A/\m$ of characteristic $p > 0$. 
Assume $A$ has a NCCR and that $\dim A \geq 2$.  Also assume that $A$ has a canonical module. Let $f \in \Z[X_1,\ldots,X_n]$ be of content $1$. Also assume that $f$ is $\Q$-smooth and is smooth mod-$p$. Set $T = A[X_1,\ldots,X_n]/(f)$ and let $\n$ be a maximal ideal of $T$ containing $\m T$. Set $A(f) = T_\n$.
Then
\begin{enumerate}[\rm (i)]
\item
$A(f)$ is flat over $A$ with regular fiber. In particular if $A$ is Gorenstein then so is $A(f)$.
\item
$A(f)$ is an excellent normal \CM \  local domain of mixed characterisitic  with perfect residue field.
\item
$A(f)$  has a  NCCR. 
\end{enumerate} 
Furthermore if $\Gamma = \Hom_A(M,M)$ is a  NCCR of $A$ then $\Lambda = \Gamma \otimes_A A(f)$ is a NCCR of $A(f)$. 
\end{theorem}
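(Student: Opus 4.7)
The plan is to derive (i), (ii), and the reflexivity/MCM parts of (iii) by standard ascent/descent along flat local maps with regular fibre, and to isolate the essential content in the global dimension computation of (iii).

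For (i), flatness of $A \to T := A[X_1, \dots, X_n]/(f)$ follows because the content-$1$ hypothesis makes $\bar f \in k[X_1, \dots, X_n]$ a nonzero element of a polynomial domain, hence a nonzerodivisor; so $0 \to A[\bar X] \xrightarrow{\cdot f} A[\bar X] \to T \to 0$ stays exact after $\otimes_A A/\m$, giving $\Tor_1^A(T, A/\m) = 0$ and flatness. The closed fibre of $A \to A(f)$ is a localization of $k[\bar X]/(\bar f) \cong (\Z/p\Z)[\bar X]/(\bar f) \otimes_{\Z/p\Z} k$, which is regular because $f$ is smooth mod $p$ and $k$ is perfect (so $k$ is separable over $\Z/p\Z$). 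Gorenstein-ness then descends through a flat local map with Gorenstein fibre.

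For (ii), excellence propagates from $A$ to $A[\bar X]$ to $A(f)$. Flat local maps with geometrically regular fibres preserve both Cohen--Macaulayness and Serre's $R_1$, so $A(f)$ is CM and satisfies $R_1 + S_2$, hence is normal, hence a domain. Mixed characteristic follows from $p \in \m \subseteq \n$ combined with the fact that $A \hookrightarrow A(f)$ extends the characteristic-zero fraction field of $A$. The residue field $A(f)/\n$ is finite over $k$ by the Nullstellensatz applied in the finitely generated $k$-algebra $k[\bar X]/(\bar f)$, and a finite extension of a perfect field is perfect.

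For (iii), set $N = M \otimes_A A(f)$. Flat base change for $\Hom$ (using that $M$ is finitely generated) gives $\Lambda \cong \End_{A(f)}(N)$, and the same argument twice yields $N^{**} \cong M^{**} \otimes_A A(f) = N$, so $N$ is reflexive. The MCM property comes from the depth formula for flat local maps applied to $\Gamma$:
\[
\depth_{A(f)} \Lambda = \depth_A \Gamma + \depth(A(f)/\m A(f)) = \dim A + \dim A(f)/\m A(f) = \dim A(f).
\]
The main obstacle is the global dimension condition: for $Q \in \Spec A(f)$ with $P = Q \cap A$, $\Lambda_Q = \Gamma_P \otimes_{A_P} A(f)_Q$ with $A_P \to A(f)_Q$ flat local with geometrically regular fibre of dimension $d := \dim A(f)_Q - \dim A_P$. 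The desired $\gldim \Lambda_Q = \dim A(f)_Q$ reduces, via the NCCR hypothesis $\gldim \Gamma_P = \dim A_P$, to the polynomial-ring-style identity $\gldim(\Gamma_P \otimes_{A_P} A(f)_Q) = \gldim \Gamma_P + d$. To establish this I would lift a regular system of parameters of the fibre to a sequence $y_1, \dots, y_d \in A(f)_Q$, verify it is both $A(f)_Q$- and $\Lambda_Q$-regular using $A_P$-flatness of $A(f)_Q$, and conclude inductively that killing one $y_i$ at a time drops $\gldim$ by exactly $1$, reducing to the unramified case $d = 0$ where global dimension is preserved under flat base change.
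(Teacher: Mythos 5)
Your parts (i) and (ii) follow the same route as the paper, with one point you must make explicit: for the ascent of $R_1$ (equivalently, for normality) you invoke ``geometrically regular fibres'' of $A\to A(f)$, but you only verified the closed fibre. The fibres over non-maximal primes $\q$ of $A$ are localizations of $\kappa(\q)[X_1,\ldots,X_n]/(f)$, where $\kappa(\q)$ may have characteristic $0$ or $p$ and may be imperfect; regularity of these rings is exactly where \emph{both} hypotheses ($\Q$-smoothness and smoothness mod $p$) are used, via the observation that $\Q$ and $\Z/p\Z$ are perfect, so regularity of $\Q[X]/(f)$ and of $(\Z/p\Z)[X]/(f)$ is geometric. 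This is fixable, but it is the entire reason the theorem carries two smoothness hypotheses, and your write-up uses only one of them.

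The genuine gap is in (iii). You propose to verify $\gldim \Lambda_Q=\dim A(f)_Q$ at every prime $Q$ by lifting a regular system of parameters of the fibre and arguing that killing each $y_i$ drops $\gldim$ by exactly $1$, reducing to a $d=0$ base case where ``global dimension is preserved under flat base change.'' Neither step is available in that generality. For the base case, preservation of $\gldim$ along a flat local map with zero-dimensional fibre requires $(\Gamma_P/\rad\Gamma_P)\otimes_{\kappa(P)}\kappa(Q)$ to stay semisimple, which needs separability of the residue field extension; for non-maximal $P$ the field $\kappa(P)$ need not be perfect, so this can fail (the paper's Lemma 2.4 assumes a perfect residue field for exactly this reason). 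For the inductive step, for a noncommutative ring only the inequality $\gldim\Lambda\ge\gldim\Lambda/(y)+1$ is formal; the upper bound is the hard direction, and the paper obtains it (as an inequality $\gldim\Gamma\otimes B\le\gldim\Gamma+\dim F$, Theorem 1.7) only after identifying $\rad(\Gamma\otimes B)$ explicitly via a Koszul-complex argument, which in turn requires Henselian rings, perfect residue fields, and the ``absolutely indecomposable'' reduction of Theorem 1.11 so that $\Gamma/\rad\Gamma$ becomes a product of matrix rings over $k$; it also uses that $\Gamma\otimes B$ is semiperfect (so $\gldim$ equals $\pdim$ of the quotient by the radical), which fails for non-Henselian localizations $\Lambda_Q$. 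Tellingly, you never use the canonical-module hypothesis: the paper uses it to invoke the Iyama--Wemyss criterion \cite[2.17]{IW-I}, which (given that $M\otimes A(f)$ is reflexive and $\Lambda$ is maximal \CM) reduces the whole problem to proving $\gldim\Lambda=\dim A(f)$ at the maximal ideal only; one may then complete to land in the Henselian, perfect-residue-field setting where Theorem 1.7 gives $\gldim\Lambda\le\dim A(f)$, and the reverse inequality is automatic from $\Lambda$ being maximal \CM. Without such a reduction your prime-by-prime induction does not go through.
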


\s Two dimensional rings  of finite representation type have a  NCCR. 
(see \cite[Theorem-6]{Leu-E}).  For examples of two dimensional mixed characteristic rings of finite representation type see \cite{P}. Using the  above recipe  we can  construct plentiful examples of  \CM \ local domain of mixed characteristic having NCCR's. If $k$ is algebraically closed then it can be easily shown that if  $A(f) \cong A(g)$ as $A$-algebra's then the hypersurfaces defined by $f$ and $g$ in $\mathbb{A}^n(k)$ are birational

\s \label{equi}\emph{Equi-characteristic case (local):}
Let $(A,\m)$ be an excellent equi-characteristic \CM \  local domain with perfect residue field $k$. Assume $A$ contains $k$, $\dim A \geq 2$ and that it has a canonical module. Let $f \in k[X_1,\ldots,X_n]$ be smooth, i.e., $k[X_1,\ldots,X_n]/(f)$ is a regular ring.
We show
\begin{theorem}\label{Ex-2}
(with hypotheses as in \ref{equi})
Assume $A$ has a NCCR. Set $T = A[X_1,\ldots,X_n]/(f)$.  Let $\n$ be a maximal ideal of $T$ containing $\m T$. Set $A(f) = T_\n$.
Then
\begin{enumerate}[\rm (i)]
\item
$A(f)$ is flat over $A$ with regular fiber. In particular if $A$ is Gorenstein then so is $A(f)$.
\item
$A(f)$ is an equi-characteristic excellent normal \CM \  local domain  with perfect residue field.
\item
 $A(f)$ has  a  NCCR. 
\end{enumerate} 
Furthermore if $\Gamma = \Hom_A(M,M)$ is a NCCR of $A$ then $\Lambda = \Gamma \otimes_A A(f)$ is a NCCR of $A(f)$.
\end{theorem}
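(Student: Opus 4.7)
My strategy is to mirror the proof of Theorem \ref{Ex-1} for the mixed-characteristic case. The role of $\Z$ is now played by the perfect field $k \subseteq A$, and the hypothesis $A \supseteq k$ means every residue field of $\Spec A$ is already a $k$-extension, so the smoothness of $f$ over $k$ forces regularity of \emph{every} fiber of $\Spec T \to \Spec A$ without any content-$1$ condition.

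For (i), flatness of $A \to T$ follows from the local criterion: the short exact sequence $0 \to A[\bx] \xrightarrow{\,f\,} A[\bx] \to T \to 0$ (in which $f$ is a nonzerodivisor, as $A[\bx]$ is a domain and $f \ne 0$) gives
\[
\Tor_1^A(T, k) \;=\; \ker\!\big(k[\bx] \xrightarrow{\,f\,} k[\bx]\big) \;=\; 0.
\]
The closed fibre $A(f)/\m A(f)$ is a localisation of the regular ring $k[\bx]/(f)$, hence regular; the Gorenstein case follows from the standard ascent of Gorensteinness along flat local maps with Gorenstein fibre.

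For (ii), CM-ness ascends under flat local maps with CM (here regular) closed fibre. For normality I would use that $k$ perfect implies $k[\bx]/(f)$ is not merely regular but smooth over $k$, so every fibre $\kappa(P)[\bx]/(f)$ of $\Spec T \to \Spec A$ is regular (as $\kappa(P) \supseteq k$); hence $A \to T$ is a regular homomorphism and normality ascends to $A(f)$. A local Noetherian normal ring is a domain, so $A(f)$ is a normal domain. Excellence is stable under essentially finite type morphisms, and the residue field of $A(f)$, being a finite extension of $k$, is perfect.

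For (iii), set $B = A(f)$, $N = M \otimes_A B$, and $\Lambda = \Gamma \otimes_A B$. Flatness of $A \to B$ together with finite presentation of $M$ gives $\Lambda \cong \End_B(N)$; reflexivity of $N$ is preserved under flat base change; and the MCM condition follows from the depth formula $\depth_B \Lambda = \depth_A \Gamma + \dim(B/\m B) = \dim A + \dim(B/\m B) = \dim B$. The main obstacle is the global-dimension condition $\gldim \Lambda_P = \dim B_P$ for every prime $P$ of $B$. Writing $P' = P \cap A$, the induced map $A_{P'} \to B_P$ is flat local with regular fibre of dimension $t := \dim B_P - \dim A_{P'}$. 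I would then establish the auxiliary lemma: for any flat local extension $(R,\m_R) \to (S,\m_S)$ with regular fibre of dimension $t$ and any module-finite $R$-algebra $\Delta$ with $\gldim \Delta = \dim R$, one has $\gldim(\Delta \otimes_R S) = \dim S$. The upper bound is obtained by lifting a regular system of parameters of $S/\m_R S$ to a regular sequence $\bx \subseteq S$ (regularity in $S$ coming from flatness) and splicing a finite $\Delta$-projective resolution with the Koszul complex on $\bx$ via a change-of-rings double complex, giving a total resolution of length $\dim R + t = \dim S$ for each simple $(\Delta \otimes_R S)$-module. The lower bound uses that $\Delta \otimes_R S$ is MCM over $S$, so the Auslander--Buchsbaum formula forces any simple module of finite projective dimension to have $\pdim$ equal to $\dim S$. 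Applying the lemma with $R = A_{P'}$, $S = B_P$, $\Delta = \Gamma_{P'}$ completes the verification of the NCCR condition.
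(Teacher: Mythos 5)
Parts (i) and (ii) of your argument are fine and essentially match the paper (your route to normality via geometric regularity of $k[\bx]/(f)$ over the perfect field $k$ is in fact the cleaner version of the paper's $R_1$-check at height-one primes, and is exactly the observation the paper records for this theorem). The problem is in (iii).

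Your plan for (iii) is to verify $\gldim \Lambda_P = \dim B_P$ at \emph{every} prime $P$ of $B$ by an auxiliary lemma for an arbitrary flat local map $(R,\m_R)\to(S,\m_S)$ with regular fibre. That lemma, in the generality you state it, is not available: it is essentially Theorem \ref{main} of the paper, whose hypotheses are that the rings are \emph{Henselian} with \emph{perfect residue fields}, and whose proof is the technical core of the paper (Sections 3--6). The step you gloss over --- ``splicing a finite $\Delta$-projective resolution with the Koszul complex on $\bx$'' --- only bounds $\pdim$ of the module $(\Delta\otimes_R S)/(\rad\Delta\otimes_R S+(\bx))$. To conclude anything about $\gldim(\Delta\otimes_R S)$ you must identify this module with $(\Delta\otimes_R S)/\rad(\Delta\otimes_R S)$, i.e.\ you must show $(\Delta/\rad\Delta)\otimes_R S/(\bx)$ is semisimple. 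That is precisely where the paper needs $\Delta/\rad\Delta$ to be a product of matrix rings over the residue field (Theorem \ref{formula} plus the reduction to absolutely indecomposable summands via Theorem \ref{split}), which in turn needs Krull--Schmidt (Henselian) and separability (perfect residue field). At a non-maximal prime $P'$ of $A$ the residue field $\kappa(P')$ is in general not perfect when $\operatorname{char} k=p>0$, and $A_{P'}$ is not Henselian, so your lemma's proof sketch breaks down exactly at the primes you most need it for.

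The paper sidesteps this entirely: since $B=A(f)$ is a CM normal local domain with a canonical module, $N=M\otimes_A B$ is reflexive and $\Lambda$ is MCM, the criterion of Iyama--Wemyss \cite[2.17]{IW-I} reduces the NCCR condition to the single equality $\gldim\Lambda=\dim B$ at the maximal ideal, where the residue field is a finite (hence perfect) extension of $k$; one then completes (Corollary \ref{gdim-basic} and \cite[1.4]{R}) to land in the Henselian setting of Theorem \ref{main}, and gets the lower bound from MCM-ness as you do. Note that your proposal never uses the canonical-module hypothesis of \ref{equi} --- that hypothesis exists precisely to enable this reduction. To repair your proof, replace the prime-by-prime verification with the \cite[2.17]{IW-I} reduction and then apply Theorem \ref{main} after completing.
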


\s  In both Theorems  it is clear that
  $M \otimes_A  A(f)$ is reflexive and $\Lambda $ is maximal \CM \ as a $A(f)$-module.  To prove finiteness of global dimension of $\Lambda$ we may complete $A(f)$ (see \cite[1.4]{R}). The essential point is to prove the following result:
 
\begin{theorem}\label{main}
Let $(A,\m) \rt (B,\n)$ be a flat local homomorphism  of Henselian  local rings with fiber $F = B/\m B$ regular local. Assume the residue fields $k = A/\m$ and $l = B/\n$ are perfect.
Let $M$ be a finitely generated $A$-module such that $\Gamma = \Hom_A(M,M)$ has finite global dimension. Then 
$$\gldim \Gamma \otimes_A B   \leq  \gldim \Gamma  + \dim F.$$ 
\end{theorem}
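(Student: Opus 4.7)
The plan is to induct on $d = \dim F$, and at each stage to reduce the bound on $\gldim \Lambda$ (where $\Lambda = \Gamma \otimes_A B$) to a bound on $\pdim_\Lambda S$ for every simple $\Lambda$-module $S$. Since $M$ is finitely generated, $\Gamma$ is module-finite over $A$, hence $\Lambda$ is module-finite over the Henselian local ring $B$. Such an algebra is Noetherian and semiperfect (idempotents of the finite-dimensional $l$-algebra $\Lambda/\rad \Lambda$ lift via Hensel's lemma); for Noetherian semiperfect rings, the usual minimal-projective-resolution argument with $\Tor(\Lambda/\rad\Lambda, -)$ yields
\[ \gldim \Lambda \;=\; \sup\bigl\{ \pdim_\Lambda S : S \text{ a simple } \Lambda\text{-module}\bigr\}. \]
Thus it suffices to prove $\pdim_\Lambda S \leq \gldim \Gamma + d$ for each simple $S$.

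For the inductive step ($d \geq 1$), I would choose $\bar{x} \in \n F \setminus (\n F)^2$ and lift it to $x \in \n$. Then $F/\bar{x}F$ is regular local of dimension $d-1$, and the local criterion of flatness gives that $x$ is $B$-regular, $B/xB$ is flat over $A$ with fiber $F/\bar{x} F$, and $B/xB$ is Henselian with residue field $l$. The flatness of $B/xB$ over $A$ forces $\Tor^A_1(\Gamma, B/xB) = 0$, so tensoring the exact sequence $0 \to B \xrightarrow{x} B \to B/xB \to 0$ with $\Gamma$ over $A$ shows that $x$ is a central non-zero-divisor on $\Lambda$ with $\Lambda / x\Lambda = \Gamma \otimes_A (B/xB)$. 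Now any simple $\Lambda$-module $S$ satisfies $xS = 0$ (since $x \in \n\Lambda \subseteq \rad \Lambda$), and the standard change-of-rings lemma for a central non-zero-divisor---proved by induction on $\pdim_{\Lambda/x\Lambda} S$ using the resolution $0 \to \Lambda \xrightarrow{x} \Lambda \to \Lambda/x\Lambda \to 0$---gives $\pdim_\Lambda S \leq \pdim_{\Lambda/x\Lambda} S + 1$. Applying the inductive hypothesis to $A \to B/xB$ closes the step.

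The substantive case is the base case $d = 0$, where $\n = \m B$ and $\Lambda/\m\Lambda = (\Gamma/\m\Gamma) \otimes_k l$. Here perfectness of $k$ enters decisively: for every finite-dimensional $k$-algebra $R$ with $k$ perfect, $(R/\rad R) \otimes_k l$ remains semisimple, so $\rad(R \otimes_k l) = \rad(R) \otimes_k l$. Consequently $\Lambda/\rad \Lambda = (\Gamma/\rad \Gamma) \otimes_k l$ is semisimple, and every simple $\Lambda$-module $S$ appears as a direct summand of $V \otimes_k l$ for some simple $\Gamma$-module $V$. Since $V$ is annihilated by $\m$, one has $V \otimes_k l = V \otimes_A B$ as $\Lambda$-modules. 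Flatness of $B$ over $A$, together with the fact that $P \otimes_A B$ is $\Lambda$-projective for every finitely generated $\Gamma$-projective $P$, means that $-\otimes_A B$ carries a $\Gamma$-projective resolution of $V$ of length $\leq \gldim \Gamma$ to a $\Lambda$-projective resolution of $V \otimes_A B$. Hence $\pdim_\Lambda(V \otimes_A B) \leq \pdim_\Gamma V \leq \gldim \Gamma$, and passing to the summand $S$ gives $\pdim_\Lambda S \leq \gldim \Gamma$.

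The main obstacle is precisely this base case: a priori, simple $\Lambda$-modules need not be extended from simple $\Gamma$-modules once $l/k$ is a nontrivial residue-field extension, and one cannot hope for equality of simples. The perfectness of $k$ is what keeps the semisimple quotient intact under base change $-\otimes_k l$, so that every simple $\Lambda$-module is at least realized as a direct summand of an induced module $V \otimes_A B$; passage to a direct summand then does not increase projective dimension, and the flat base change bound finishes the argument.
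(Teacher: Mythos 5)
Your proof is correct, and it takes a genuinely different route from the paper's. The paper's proof of Theorem \ref{main} first reduces, via a finite flat base change $A \to A'$ produced by Theorem \ref{split} and Lemma \ref{uuu}, to the case where every indecomposable summand of $M$ is absolutely indecomposable, so that $\Gamma/\rad\Gamma$ is a product of matrix rings over $k$ and $(\Gamma/\rad\Gamma)\otimes_A B$ a product of matrix rings over the fiber $F$; it then identifies $\rad(\Gamma\otimes B)$ as $\rad\Gamma\otimes B + (\bx)(\Gamma\otimes B)$ for a regular system of parameters $\bx$ of $F$ and resolves $(\Gamma\otimes B)/\rad(\Gamma\otimes B)$ in one stroke by the Koszul complex $\K(\bx,B)\otimes(\Gamma/\rad\Gamma\otimes B)$. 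That reduction is what the machinery of Sections 3--5 (the direct limit ring $T=\varinjlim A^E$, descent to a finite level, and the matrix description of $\End_A(E)/\rad\End_A(E)$) is deployed for. You avoid it entirely: you peel off the fiber dimension one central nonzerodivisor at a time via the change-of-rings inequality $\pdim_\Lambda S\leq\pdim_{\Lambda/x\Lambda}S+1$ (all hypotheses check out: flatness of $B/xB$ over $A$ by the local criterion, $x$ central and $\Lambda$-regular, $xS=0$ since $x\in\rad\Lambda$), and in the base case $\m B=\n$ you use only the equal-radicals Lemma \ref{equal-rad} --- where perfectness of $k$ enters for both arguments --- together with the observation that every simple $\Lambda$-module is a direct summand of $V\otimes_A B$ for some simple $\Gamma$-module $V$, and projective dimension does not increase on passing to direct summands. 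That observation is precisely what substitutes for the absolutely-indecomposable reduction: the paper arranges for the simple $\Lambda$-modules to literally be the simples extended from $\Gamma$, whereas you are content with their being summands of extended modules. Your base case is in substance the ``$\leq$'' half of Corollary \ref{gdim-basic}, proved directly rather than by citation to Ramras. The net effect is a shorter and more self-contained proof of Theorem \ref{main}; the heavier construction of Section 4 is of course still needed for Theorem \ref{split} itself.
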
 

\s \label{global} \emph{Equi-characteristic case (global):} Let $k$ be a perfect field.  Let $A$ be an affine $k$-algebra. Assume $A$ is a \CM \  normal domain of dimension $d \geq 2$. Suppose $A$ has a NCCR $\Gamma = \Hom_A(M,M)$.  Let $f \in k[X_1,X_2,\ldots,X_n]$ be a polynomial such that its homogenization $\widetilde{f}$ defines a smooth hypersurface in $\mathbb{P}^{n}(\ov{k})$ (here $\ov{k}$ is the algebraic closure of $k$). We show
\begin{theorem}\label{Ex-3}
(with hypotheses as in \ref{global}) Let $T(f) = A[X_1,\ldots,X_n]/(f)$. Then
\begin{enumerate}[\rm (i)]
\item
$T(f)$ is a \CM \  normal domain. If $A$ is Gorenstein then so is $T(f)$.
\item
$\Gamma\otimes_A T(f)$ is a NCCR of $T(f)$.
\end{enumerate}
\end{theorem}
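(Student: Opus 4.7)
\emph{Plan.} Write $T(f) = A \otimes_k S$ with $S := k[X_1,\ldots,X_n]/(f)$. The hypothesis that $\wt{f}$ defines a smooth hypersurface in $\mathbb{P}^n(\ov{k})$ implies that the affine chart $S \otimes_k \ov{k}$ is regular; since $k$ is perfect, $S$ is smooth over $k$, and (for $n \geq 2$) it is also geometrically connected---smooth positive-dimensional hypersurfaces in projective space are connected---hence geometrically integral. Thus $A \rt T(f)$ is a flat extension of domains with geometrically regular fibers. Part (i) then follows by standard ascent: the \CM{} and normality properties propagate along flat base change with regular fibers (Serre's $(S_2)$ and $(R_1)$), and the Gorenstein property does too since regular fibers are Gorenstein.

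For (ii), finite presentation of $M$ together with flatness of $A \rt T(f)$ gives $\Lambda = \Gamma \otimes_A T(f) \cong \End_{T(f)}(M \otimes_A T(f))$ and shows $M \otimes_A T(f)$ is reflexive. That $\Lambda$ is maximal \CM{} over $T(f)$ follows from the flat-base-change depth formula: for every $Q \in \Spec T(f)$ with $P := Q \cap A$,
\[ \depth \Lambda_Q = \depth \Gamma_P + \depth (T(f)/PT(f))_Q = \dim A_P + \dim (T(f)/PT(f))_Q = \dim T(f)_Q, \]
using that $\Gamma$ is \CM{} over $A$ and the fibers are regular.

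The crux is bounding $\gldim \Lambda_Q$. Because $A$ is Jacobson, every maximal ideal $\M$ of $T(f)$ contracts to a maximal ideal $\m$ of $A$, whose residue fields are finite (hence perfect) extensions of $k$. Henselizing produces a flat local map $(A_\m^h,\m^h) \rt (T(f)_\M^h,\M^h)$ of Henselian local rings with regular fiber $F$ and perfect residue fields. By faithfully flat descent as invoked before Theorem~\ref{main} (see \cite[1.4]{R}), henselization preserves the $\gldim$ of module-finite algebras and the NCCR relation, so $\gldim (\Gamma_\m)^h = \dim A_\m$. Theorem~\ref{main} then yields
\[ \gldim \Lambda_\M = \gldim \Lambda_\M^h \leq \gldim (\Gamma_\m)^h + \dim F = \dim T(f)_\M, \]
and the maximal \CM{} property of $\Lambda_\M$ forces equality. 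A non-maximal prime $Q$ is handled by localizing a finite $\Lambda_\M$-projective resolution from a maximal $\M \supset Q$ (giving $\gldim \Lambda_Q < \infty$) and then invoking Auslander-Buchsbaum for module-finite algebras over the \CM{} local ring $T(f)_Q$ to conclude $\gldim \Lambda_Q = \dim T(f)_Q$.

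The main obstacle is this descent step---verifying that the NCCR hypothesis henselizes correctly and that $\gldim \Lambda_\M$ is unchanged under henselization---so that Theorem~\ref{main} can be applied in the Henselian setting and its conclusion returned to $T(f)_\M$. These are precisely the faithful-flatness facts packaged in the lemma cited immediately before Theorem~\ref{main}, and everything else in the argument is routine ascent along a flat map with regular fibers.
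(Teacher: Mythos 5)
Your argument is correct, and its skeleton --- flat base change with geometrically regular fibers for part (i), then reduction to maximal ideals and an application of Theorem~\ref{main} in a Henselian local situation for part (ii) --- is the same as the paper's. The differences lie in the supporting steps, and in each case your route is a legitimate alternative. For integrality of $T(f)$ the paper does not use the decomposition $T(f)=A\otimes_k S$: it proves directly that $(f)K[X_1,\ldots,X_n]\cap A[X_1,\ldots,X_n]=(f)A[X_1,\ldots,X_n]$ by a Gauss-lemma argument over the height-one localizations $A_P$, whereas you deduce geometric integrality of $S$ from smoothness and connectedness of the projective hypersurface (valid for $n\ge 2$, which is implicitly needed in the paper's irreducibility step as well) and embed $A\otimes_k S$ into the domain $K\otimes_k S$; your version is shorter but leans on the connectedness of smooth positive-dimensional hypersurfaces, while the paper's is elementary. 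For the contraction of maximal ideals the paper uses a dimension count for affine domains; your appeal to the general Nullstellensatz for Jacobson rings reaches the same conclusion more directly. For the global-dimension bound the paper completes $T(f)_\M$ (via \cite[22.4]{Mat} and Corollary~\ref{gdim-basic}) rather than henselizing; both base changes satisfy the hypotheses of \ref{gdim-basic} (the maximal ideal extends to the maximal ideal and the residue field, a finite extension of $k$, is perfect), so either works --- the precise tool justifying your ``descent step'' is Corollary~\ref{gdim-basic}, not \cite[1.4]{R} --- though the henselization route additionally needs the standard fact that $A_\m^{h}\rt T(f)_\M^{h}$ is again flat with regular closed fiber. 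Finally, your treatment of non-maximal primes (localize a finite projective resolution, then apply the Auslander--Buchsbaum-type equality for orders of finite global dimension) is exactly the content of the citation \cite[2.17]{IW-I} that the paper invokes; note that this step also uses the existence of a canonical module for $T(f)$, which the paper records explicitly and which holds since $T(f)$ is an affine \CM{} $k$-algebra.
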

 Let $K$ be the quotient field of $A$. It is easily shown that if $T(f) \cong T(g)$ as $A$-algebra's  then $V(f)$ is isomorphic to $V(g)$ in the affine space $\mathbb{A}^n(\ov{K})$. Thus there are lot of examples of non-isomorphic rings $A(f)$ having NCCR.

\s The main technical tool in this paper is a notion we call absolutely indecomposable modules over a Hensel local ring. Let $(A,\m)$ be a Henselian  \ local ring of dimension $d \geq 0$ and residue field $k$. As $A$ is Henselian, the category of finitely generated $A$-modules is Krull-Schmidt, i.e., any finitely generated $A$-module is uniquely a finite direct sum of indecomposable $A$-modules.  Let $M$ be a finitely generated $A$-module and let $\rad \End_A(M)$ be the radical of $\End_A(M)$. 
Recall that a module $E$ is indecomposable if and only if $\End_A(E)$ is local; equivalently $\End_A(E)/\rad \End_A(E)$ is a division ring. We say $E$ is \textit{absolutely indecomposable} if $\End_A(E)/\rad \End_A(E) \cong k $. If $M = M_1^{a_1} \oplus \cdots \oplus M_n^{a_n}$ with $M_i$ absolutely indecomposable then \\
$\End_A(M)/\rad \End_A(M)$ is a direct product of matrix rings over $k$. This enables us to keep track of $\End_A(M)\otimes B$ when $B$ is flat over $A$. The main technical result of this paper is:
\begin{theorem}\label{split}
Let $(A,\m)$ be a Henselian local ring with perfect residue field $k$. Let $M$ be a finitely  generated $A$-module. Then there exists a finite flat extension of the form $R = A[X]/(\phi(X))$ where  $\phi(X)$ is monic and $\ov{\phi(X)}$ is irreducible in $k[X]$ such that the $R$-module $M\otimes_A R$ is a finite direct sum of absolutely indecomposable $R$-modules.  Furthermore 
$\gldim \End_A(M) = \gldim \End_R(M\otimes_A R)$. 
\end{theorem}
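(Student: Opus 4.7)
The plan is to use Krull--Schmidt over the Henselian ring $A$ to reduce to indecomposable summands $M_i$, and then find a single finite \'etale extension $R/A$ whose residue field simultaneously splits the finite-dimensional division algebras $D_i = \End_A(M_i)/\rad \End_A(M_i)$ attached to each $M_i$.

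Write $M = \bigoplus_i M_i^{a_i}$ with each $M_i$ indecomposable, and set $\Gamma_i = \End_A(M_i)$, a local $A$-algebra with $D_i := \Gamma_i/\rad \Gamma_i$ a finite-dimensional division algebra over $k$. Since $k$ is perfect, each $D_i$ is a separable $k$-algebra, so there is a finite Galois extension $l/k$ splitting every $D_i$ simultaneously, i.e.\ $D_i \otimes_k l \cong \prod_j \mathrm{Mat}_{n_{ij}}(l)$. Pick a primitive element $\alpha$ of $l/k$, let $\overline{\phi}(X) \in k[X]$ be its minimal polynomial (irreducible and separable), and lift it to a monic $\phi(X) \in A[X]$. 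Then $R := A[X]/(\phi)$ is free of rank $\deg \phi$ over $A$, with $R/\m R \cong l$ a field, so $R$ is a local Henselian ring with residue field $l$; separability of $\overline{\phi}$ forces $\phi'$ to be a unit in $R$, so $A \to R$ is finite \'etale and faithfully flat.

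Next, set $\Lambda_i := \End_R(M_i \otimes_A R) = \Gamma_i \otimes_A R$ and establish $\rad \Lambda_i = (\rad \Gamma_i) \otimes_A R$. For ``$\supseteq$'', a power of $\rad \Gamma_i$ lies in $\m \Gamma_i \subseteq \m R \cdot \Lambda_i \subseteq \rad \Lambda_i$. For ``$\subseteq$'', the quotient $\Lambda_i/((\rad \Gamma_i) \otimes_A R)$ equals $D_i \otimes_k l \cong \prod_j \mathrm{Mat}_{n_{ij}}(l)$, which is semisimple precisely because $D_i/k$ is separable. Since $\Lambda_i$ is Henselian (finite over the Henselian ring $R$), primitive idempotents lift from this semisimple quotient, yielding a decomposition $M_i \otimes_A R = \bigoplus_j N_{ij}^{n_{ij}}$ in which each $N_{ij}$ is indecomposable with $\End_R(N_{ij})/\rad \cong l$, hence absolutely indecomposable. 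Thus $M \otimes_A R$ is a direct sum of absolutely indecomposable $R$-modules.

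For the global dimension equality, write $\Gamma = \End_A(M)$ and $\Lambda = \Gamma \otimes_A R$. The inequality $\gldim \Gamma \leq \gldim \Lambda$ follows from faithfully flat descent of projectivity along the faithfully flat extension $\Gamma \to \Lambda$: for each finitely generated $\Gamma$-module $N$, the $m$-th syzygy in a $\Gamma$-projective resolution is $\Gamma$-projective iff its base change to $\Lambda$ is $\Lambda$-projective, giving $\pdim_\Gamma N = \pdim_\Lambda (N \otimes_A R)$. Conversely, because $R/A$ is \'etale, it admits a separability idempotent $e \in R \otimes_A R$, which produces a $\Lambda$-linear splitting $N \to N \otimes_A R$ of the multiplication map for any $\Lambda$-module $N$; combined with $\pdim_\Lambda (N \otimes_A R) \leq \pdim_\Gamma N$ (from tensoring a $\Gamma$-projective resolution), this yields $\pdim_\Lambda N \leq \gldim \Gamma$, and hence $\gldim \Lambda \leq \gldim \Gamma$. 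The main obstacle is the non-commutative radical identification $\rad \Lambda_i = (\rad \Gamma_i) \otimes_A R$: perfectness of $k$ must be invoked at exactly the right place to guarantee that the semisimple quotient $D_i \otimes_k l$ is not merely semisimple but a product of matrix algebras over $l$, which is what forces each indecomposable summand after base change to be absolutely indecomposable.
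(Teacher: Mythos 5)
Your proposal is correct, but it reaches the conclusion by a genuinely different route from the paper. The paper never isolates ``the right'' finite extension in advance: it builds the whole compatible tower $\{A^E\}_{E\in\C_k}$ of finite flat local extensions indexed by the finite subextensions of $\ov{k}/k$, passes to the Noetherian Henselian direct limit $T=\varinjlim A^E$, whose residue field is $\ov{k}$ (so that indecomposable and absolutely indecomposable coincide there by Theorem \ref{abs-indec}(iv)), decomposes $M\otimes_A T$ over $T$, and then descends that decomposition to a finite stage $A^K$ by a Noetherian approximation lemma (Lemma \ref{l-p-t}); the global dimension equality then falls out of Corollary \ref{gdim-basic}. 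You instead read off exactly which extension is needed: a finite Galois splitting field $l$ of the finitely many separable division algebras $\End_A(M_i)/\rad\End_A(M_i)$, lifted to a single standard \'etale $R=A[X]/(\phi)$, followed by the radical identification $\rad(\Gamma_i\otimes R)=(\rad\Gamma_i)\otimes R$ (essentially the paper's Lemma \ref{equal-rad}) and idempotent lifting in the semiperfect ring $\Gamma_i\otimes R$ from the product of matrix algebras $D_i\otimes_k l$. Your route is more economical — no direct limits and no approximation argument, and it makes visible that the obstruction to absolute indecomposability is precisely the Brauer-theoretic data of the residue division algebras — at the cost of invoking splitting fields of central simple algebras and, for the reverse global dimension inequality, the separability idempotent of $R/A$ (the paper gets both inequalities at once from $\gldim\Gamma=\pdim_\Gamma\Gamma/\rad\Gamma$ and base change of the radical, following Ramras). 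Two small points you should make explicit if you write this up: the identifications $\End_R(eN)\cong e\,\End_R(N)\,e$ and $\rad(e\Lambda e)=e(\rad\Lambda)e$ underlying the claim $\End_R(N_{ij})/\rad\cong l$, and the descent of projectivity along $\Gamma\to\Gamma\otimes_A R$ (easiest here because $R$ is $A$-free with $A$ as a direct summand, so $N\otimes_A R\cong N^{\deg\phi}$ as $\Gamma$-modules). Neither is a gap; both are standard.
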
 

We now describe in brief the contents of this paper. In section two we discuss some preliminaries that we need. In section three we introduce the notion of absolutely indecomposable modules. We prove Theorem 1.11 in section 4. In section five we give a description of $\End_A(E)/\rad \End_A(E)$. We prove Theorem 1.7 in section 6.  In section seven we prove Theorems 1.2 and 1.5. Finally in section eight we prove Theorem 1.9.
\section{preliminaries}
In this paper all commutative rings considered are Noetherian. Commutative rings will be denoted as $A, B$ etc.  All non-commutative rings considered will be an $A$-algebra for some commutative Noetherian ring $A$, furthermore they will be finitely generated as an $A$-module. Thus all non-commutative rings in this paper will be both left and right Noetherian. Non-commutative rings will be denoted as $\Gamma, \Lambda$ etc. Also all modules in this paper are left modules and they will be finitely generated.

In this section we collect some preliminaries which we need. I think that all the results here are already known. I include  proofs of some of them as I do not have a reference. 
\s \label{basic-0} Suppose $\Gamma$ is a ring finitely generated over $A$. Let $\rr = \rad \Gamma$. If $A$ is local  with maximal ideal $\m$ then 
$\Gamma / \rr$ is semisimple and $\rr \supseteq \m \Gamma \supseteq \rr^n$ for some $n \geq 1$. \cite[20.6]{Lam}.

The following is well-known. 
 \begin{proposition}\label{basic-1}
 Let $(A,\m)$ be local and let $M$ be an $A$-module. Set $\Gamma = \End_A(M)$ and  let $\rr = \rad \Gamma$. Then
\begin{enumerate}[\rm (1)]
\item
$\Hom_A(M,\m M)$ is a two sided ideal in $\Gamma$.
\item
$\Hom_A(M,\m M) \subseteq \rr.$
\item
$\m \Gamma \subseteq \Hom_A(M , \m M)$.
\end{enumerate} 
 \end{proposition}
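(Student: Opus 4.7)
The plan is to verify the three statements in order, each relying on standard facts about finitely generated modules over local rings.

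For part (1), I would unpack the definitions: a homomorphism $\varphi \colon M \to \m M$ composed on either side with an element $g \in \End_A(M)$ should still land in $\m M$. On the right this is immediate since $\varphi$ already takes values in $\m M$. On the left it uses only that $g$ is $A$-linear, so $g(\m M) \subseteq \m g(M) \subseteq \m M$. Hence $\Hom_A(M,\m M)$ is stable under both left and right multiplication by $\Gamma$, giving a two-sided ideal.

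For part (2), the cleanest route is the characterization: a two-sided ideal $I$ of $\Gamma$ lies in $\rad \Gamma$ iff $1+f$ is a unit of $\Gamma$ for every $f \in I$. Given $f \in \Hom_A(M,\m M)$, I would show that $1+f$ is an isomorphism of $M$. Surjectivity follows from Nakayama's lemma: since $f(M) \subseteq \m M$, the image of $1+f$ together with $\m M$ equals $M$, and $M$ is finitely generated. Injectivity then comes from the classical theorem that a surjective endomorphism of a finitely generated module over a commutative Noetherian ring is automatically injective. So $1+f$ is a unit in $\Gamma$, and (2) follows from (1).

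Part (3) is the simplest: for $a \in \m$ and $g \in \Gamma$, the endomorphism $ag$ sends $M$ into $aM \subseteq \m M$, so $ag \in \Hom_A(M,\m M)$. There is no real obstacle here; the only subtle step is invoking the injectivity of surjective endomorphisms of finitely generated modules in part (2), but that is a standard lemma one would just cite.
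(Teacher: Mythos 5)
Your proof is correct. The paper itself gives no argument for this proposition --- it is stated as ``well-known'' and used without proof --- so there is nothing to compare against; your write-up supplies exactly the standard argument one would expect. All three steps are sound: (1) is the two-line check that composition on either side preserves the condition ``image contained in $\m M$'' (using $A$-linearity for composition on the left); (3) is immediate; and for (2) the combination of the criterion ``a two-sided ideal $I$ lies in $\rad \Gamma$ iff $1+f$ is a unit for all $f \in I$'' with Nakayama (legitimate here, since the paper's standing convention is that all modules are finitely generated) and the fact that a surjective endomorphism of a finitely generated module over a commutative ring is injective is the standard route. The only cosmetic remark is that the injectivity-of-surjective-endomorphisms lemma needs no Noetherian hypothesis (it follows from the determinant trick), though of course the paper assumes Noetherian throughout, so nothing is at stake.
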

An easy consequence of the above result is
\begin{proposition}\label{basic-2}
Let $f \colon (A,\m) \rt (B,\n)$ be a flat local map. Let $M$ be an $A$-module. Set 
$\Gamma = \End_A(M)$ and  let $\rr = \rad \Gamma$. Set $\Lambda  = \Gamma \otimes_A B$. Then $\rr \otimes B$ is a two sided ideal  contained in $ \rad \Lambda.$ 
\end{proposition}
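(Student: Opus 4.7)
The plan is to combine \ref{basic-0}, applied to both $\Gamma$ and $\Lambda$, with the flatness of $B$ over $A$. First, since $B$ is $A$-flat, the natural map $\rr \otimes_A B \rt \Gamma \otimes_A B = \Lambda$ is injective, so I may regard $\rr \otimes B$ as a submodule of $\Lambda$. That it is in fact a two-sided ideal follows from the two-sided ideal property of $\rr$ in $\Gamma$ together with the pure-tensor multiplication rule $(\gamma \otimes b)(r \otimes b')(\gamma' \otimes b'') = \gamma r \gamma' \otimes bb'b''$, extended by linearity.

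Next I would show that a suitable power of $\rr \otimes B$ already lies inside $\rad \Lambda$. By \ref{basic-0} applied to $\Gamma$ over $A$, there exists $n \geq 1$ with $\rr^n \subseteq \m \Gamma$. A routine check on pure tensors gives the identification $(\rr \otimes B)^n = \rr^n \otimes_A B$ inside $\Lambda$, so
\[ (\rr \otimes B)^n \subseteq \m \Gamma \otimes_A B = \m B \cdot \Lambda \subseteq \n \Lambda, \]
the last inclusion using that $A \rt B$ is local. Since $M$ is finitely generated, $\Gamma$ is a finitely generated $A$-module, hence $\Lambda$ is a finitely generated $B$-module; applying \ref{basic-0} to $\Lambda$ over $B$ therefore gives $\n \Lambda \subseteq \rad \Lambda$. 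Chaining the inclusions yields $(\rr \otimes B)^n \subseteq \rad \Lambda$.

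To conclude, I would invoke semiprimeness of the Jacobson radical: $\Lambda / \rad \Lambda$ is semisimple and therefore contains no nonzero nilpotent two-sided ideal. Consequently the image of $\rr \otimes B$ in $\Lambda / \rad \Lambda$, whose $n$th power vanishes, must itself be zero. This gives $\rr \otimes B \subseteq \rad \Lambda$, as claimed.

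I do not anticipate any genuine obstacle; the one step meriting brief care is the identification $(\rr \otimes B)^n = \rr^n \otimes_A B$, which needs both inclusions checked on elementary tensors. The forward inclusion is immediate from the multiplication rule above, while the reverse uses that $\rr^n$ is spanned by $n$-fold products $r_1 r_2 \cdots r_n$ with each $r_i \in \rr$, together with the rewriting $r_1 \cdots r_n \otimes b = (r_1 \otimes 1) \cdots (r_{n-1} \otimes 1)(r_n \otimes b)$.
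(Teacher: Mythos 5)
Your proof is correct and follows essentially the same route as the paper: both reduce to showing $(\rr\otimes B)^n\subseteq\rad\Lambda$ via $\rr^n\subseteq\m\Gamma$ and then conclude from the semisimplicity of $\Lambda/\rad\Lambda$. The only (harmless) variation is at the last containment, where the paper passes through $\Hom_B(M\otimes B,\n(M\otimes B))$ using Proposition \ref{basic-1}, while you invoke $\n\Lambda\subseteq\rad\Lambda$ directly from \ref{basic-0} applied to $\Lambda$ over $B$ — which in fact spares you the implicit identification $\Lambda\cong\End_B(M\otimes_A B)$.
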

\begin{proof}
Put $\q = \rr \otimes B$.  Clearly $\q$ is a two sided ideal of $\Lambda$. If we prove $\q^n \subseteq \rad \Lambda$ for some $n$ then we are done for $\Lambda/\rad \Lambda$ is semisimple.

By \ref{basic-0} we have that $\rr^n \subseteq \m \Gamma$ for some $n \geq 1$.
Let $\phi_1\otimes b_1, \phi_2\otimes b_2, \cdots, \phi_n \otimes b_n \in \q$. Set
\begin{align*}
\psi &= \phi_1\otimes b_1 \circ \phi_2\otimes b_2 \circ \cdots \circ \phi_n \otimes b_n, \\
    &= (\phi_1\circ\phi_2 \circ \cdots \circ \phi_n )\otimes (b_1b_2\cdots b_n)
\end{align*}
Thus $\psi = \phi \otimes b$ for some $\phi \in \rr^n$ and $b \in B$. 
As $\rr^n \subseteq \m \Gamma $ we get that $\phi(M) \subseteq \m M$. 
So $\psi(M\otimes B) \subseteq \m (M\otimes B)$. It follows that 
$\psi \in \Hom_B(M\otimes B, \n (M \otimes B)) \subseteq \rad \Lambda $ 
(by \ref{basic-1}). It follows that $\q^n \subseteq \rad \Lambda.$                                 
\end{proof}

A natural question is when $\rr\otimes B = \rad \Lambda$? We prove
\begin{lemma}\label{equal-rad}
Let $\phi \colon (A,\m) \rt (B,\n)$ be a flat local map with $\m B = \n$. Assume $k = A/\m$ is perfect. Let $M$ be an $A$-module. Set 
$\Gamma = \End_A(M)$ and  let $\rr = \rad \Gamma$. Set $\Lambda  = \Gamma \otimes_A B$. Then $\rr \otimes B = \rad \Lambda.$ 
\end{lemma}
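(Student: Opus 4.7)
The plan is to show the reverse inclusion to Proposition~\ref{basic-2}: already we have $\rr \otimes_A B \subseteq \rad \Lambda$, so it suffices to show $\rad \Lambda \subseteq \rr \otimes_A B$. The standard way to do this is to prove that the quotient $\Lambda/(\rr \otimes_A B)$ is a semisimple Artinian ring; once that is established, for any surjection $\pi \colon \Lambda \to S$ with $S$ semisimple (so $\rad S = 0$), we have $\pi(\rad \Lambda) \subseteq \rad S = 0$, i.e.\ $\rad \Lambda \subseteq \ker \pi$.

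First I would identify $\Lambda/(\rr \otimes_A B)$ as a concrete object. By \ref{basic-0} and \ref{basic-1}, $\m \Gamma \subseteq \rr$, so $\Gamma/\rr$ is a finite-dimensional semisimple $k$-algebra (finite-dimensional because $\Gamma$ is finitely generated over $A$ and $\Gamma/\rr$ is a quotient of $\Gamma/\m\Gamma$). Since $\phi$ is flat,
\[
\Lambda/(\rr \otimes_A B) \;\cong\; (\Gamma/\rr)\otimes_A B \;\cong\; (\Gamma/\rr)\otimes_k (B/\m B) \;=\; (\Gamma/\rr)\otimes_k l,
\]
using the hypothesis $\m B = \n$ so that $B/\m B = l := B/\n$. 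This is a finite-dimensional $l$-algebra, hence in particular Artinian.

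The key input now is that $k$ is perfect. Over a perfect field, every finite-dimensional semisimple algebra is separable, meaning that its base change to any field extension remains semisimple. Concretely, Wedderburn gives $\Gamma/\rr \cong \prod_i M_{n_i}(D_i)$ with each $D_i$ a finite-dimensional division algebra whose center $Z_i$ is a finite extension of $k$; since $k$ is perfect, each $Z_i/k$ is separable and each $D_i$ is a separable $k$-algebra, hence $(\Gamma/\rr)\otimes_k l$ is semisimple. Therefore $\Lambda/(\rr \otimes_A B)$ is semisimple Artinian.

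Applying the surjection argument to the quotient map $\pi \colon \Lambda \to \Lambda/(\rr \otimes_A B)$, we obtain $\rad \Lambda \subseteq \rr \otimes_A B$, which together with \ref{basic-2} yields equality. The only place where there is any content is the base-change stability of semisimplicity; without perfectness of $k$ this can fail (a purely inseparable extension $l/k$ would introduce nilpotents), so the hypothesis on $k$ is used precisely once, at this step.
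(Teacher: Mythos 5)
Your proposal is correct and follows essentially the same route as the paper: reduce to showing that $(\Gamma/\rr)\otimes_A B \cong (\Gamma/\rr)\otimes_k l$ is semisimple, then use the Wedderburn decomposition together with perfectness of $k$ to conclude that this base change stays semisimple. The paper merely spells out the separability step you cite as standard (splitting the center $F = Z(\Xi_i)$ over $l$ via separability of $F/k$, and invoking that $\Xi\otimes_F K$ remains simple for a field extension $K$ of the center), so there is no substantive difference in approach.
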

\begin{proof}
By \ref{basic-2} we have that $\rr \otimes B \subseteq \rad \Lambda.$ It suffices to show that $\left(\Gamma/\rr \right)\otimes B$ is semisimple.  

By \ref{basic-0} we have that $\Gamma / \rr$ is semisimple. So
\[
\Gamma / \rr = M_{n_1}(\Xi_1)\times \cdots \times M_{n_r}(\Xi_r)
\]
where $\Xi_1,\ldots,\Xi_r$ are division algebras over $k$. Also note that $k \subseteq Z(\Gamma/ \rr)$, the center of $\Gamma/\rr$. It follows that $k \subseteq Z(\Xi_i)$ for each $i = 1,\ldots,r$. 

It suffices to show that $M_n(\Xi)\otimes B$ is semisimple where $\Xi$ is a division algebra finite dimensional over $k$ and $k \subseteq F = Z(\Xi)$. Set $l = B/\n$. As $k$ is perfect $F$ is separable over $k$ and so $F\otimes_k l = K_1\times \cdots \times K_s$ where $K_i$ are finite field extensions of $l$.
Notice
\begin{align*}
M_n(\Xi)\otimes_A B &= M_n(\Xi)\otimes_k k \otimes_A B \\
 &= M_n(\Xi)\otimes_k l \\
 &= M_n(\Xi)\otimes_F F \otimes_k l \\
  &= M_n(\Xi)\otimes_F (K_1\times K_2 \times \cdots \times K_s) \\
   &= (M_n(\Xi)\otimes_F K_1)\times (M_n(\Xi)\otimes_F K_2)\times \cdots  (M_n(\Xi)\otimes_F K_s).
\end{align*}
 It suffices to show that $M_n(\Xi)\otimes_F K$ is semisimple where $K$ is an extension of $F$. We first note that
 by \cite[15.1]{Lam}, the ring $\Upsilon = \Xi \otimes_F K$ is a simple ring.  Also note that $K = 1\otimes K$ is a subring of $\Upsilon$. As $\Xi$ is finite dimensional over $F$ we get that $\Upsilon$ is finite dimensional as a $K$-vector space. In particular $\Upsilon$ is Artinian. Thus by \cite[3.1]{Lam} $\Upsilon$ is a semisimple ring.   Finally notice that as $\Xi$ is finite dimensional over $F$, the natural ring homomorphism $M_n(\Xi)\otimes_F K \rt M_n(\Xi\otimes_F K)$ is an isomorphism, see \cite[7.4]{Lam}. The result follows. 
 \end{proof}
An easy consequence of the above result is the following:
\begin{corollary}\label{gdim-basic}
(with hypotheses as in \ref{equal-rad})
$$ \gldim \Gamma = \gldim \Lambda.$$
\end{corollary}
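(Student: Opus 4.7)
The plan is to realize both $\gldim \Gamma$ and $\gldim \Lambda$ as the projective dimension of the semisimple quotients $\Gamma/\rr$ and $\Lambda/\rad \Lambda$, and then transfer the equality across the flat local extension $A \to B$ using Ext base change together with faithful flatness.

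First I would note that $\Gamma$ and $\Lambda$ are module-finite over the local Noetherian rings $A$ and $B$, with semisimple quotients $\Gamma/\rr$ and $\Lambda/\rad\Lambda$ (by \ref{basic-0}) that contain every simple left module as a direct summand. Standard minimal-projective-resolution arguments in this semilocal setting give, for $R \in \{\Gamma, \Lambda\}$,
\[
\gldim R \;=\; \pdim_R(R/\rad R) \;=\; \sup\{i : \Ext^i_R(R/\rad R,\, R/\rad R) \neq 0\}.
\]
By Lemma \ref{equal-rad}, $\rad \Lambda = \rr \otimes_A B$ and consequently $\Lambda/\rad \Lambda \cong (\Gamma/\rr) \otimes_A B$, so the entire problem reduces to comparing these two Ext groups.

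For the comparison I would choose a resolution $P_\bullet \to \Gamma/\rr$ by finitely generated projective $\Gamma$-modules (available since $\Gamma$ is left Noetherian) and tensor with the flat $A$-algebra $B$, yielding a resolution of $(\Gamma/\rr) \otimes_A B$ by projective $\Lambda$-modules. For any finitely generated projective $\Gamma$-module $P$, additivity reduces the natural map
\[
\Hom_\Gamma(P, \Gamma/\rr) \otimes_A B \;\xar\; \Hom_\Lambda\bigl(P \otimes_A B,\, (\Gamma/\rr) \otimes_A B\bigr)
\]
to the tautological case $P = \Gamma$, where it is an isomorphism. Combined with the fact that taking cohomology commutes with the flat functor $- \otimes_A B$, this yields the base-change isomorphism
\[
\Ext^i_\Lambda\bigl((\Gamma/\rr) \otimes_A B,\, (\Gamma/\rr) \otimes_A B\bigr) \;\cong\; \Ext^i_\Gamma(\Gamma/\rr, \Gamma/\rr) \otimes_A B.
\]

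Finally, the flat local homomorphism $A \to B$ is automatically faithfully flat, so $V \otimes_A B = 0$ if and only if $V = 0$ for any $A$-module $V$. Applying this to each of the Ext groups above, the two vanish in exactly the same degrees, the suprema that compute $\gldim \Gamma$ and $\gldim \Lambda$ coincide, and equality follows. The main subtlety to pin down is the initial identification $\gldim R = \pdim_R(R/\rad R)$ in the non-commutative setting, which relies on the semilocal (indeed semiperfect, in the intended Henselian application) structure of $\Gamma$ and $\Lambda$ and on every simple module appearing as a summand of the semisimple quotient; once that is in hand, the rest of the argument is purely formal.
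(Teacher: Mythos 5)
Your proof is correct and follows essentially the same route as the paper's: both identify $\gldim$ with the projective dimension of the semisimple quotient (the paper via \cite[1.3]{R}), invoke Lemma \ref{equal-rad} to identify $\Lambda/\rad \Lambda$ with $(\Gamma/\rr)\otimes_A B$, and then transfer projective dimension across the flat base change. The only difference is internal to that last step: where the paper cites ``an argument similar to \cite[1.1]{R}'' (a minimal projective resolution argument), you make the transfer explicit via Ext base change plus faithful flatness --- a valid substitute, resting on the same semiperfectness of $\Gamma$ and $\Lambda$ that you correctly flag.
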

\begin{proof}
For a left $\Gamma $ module $M$ let $\pdim_\Gamma M$ denote its  projective dimension. 
By an argument similar to \cite[1.1]{R} we can show
\[
\pdim_\Gamma M = \pdim_\Lambda M\otimes_A B
\]
By \cite[1.3]{R} we have that $\gldim \Gamma = \pdim \Gamma/\rr$ (here we consider $\rr$ as a left $\Gamma$-ideal. Similarly   $\gldim \Lambda =  \pdim \Lambda/\rad \Lambda$. By \ref{equal-rad} we have that
\[
\frac{\Gamma}{\rr} \otimes_A B = \frac{\Lambda}{\rad \Lambda}.
\]
The result follows.
\end{proof}
\section{absolutely indecomposable modules}
Let $(A,\m)$ be a Henselian  \ local ring of dimension $d \geq 0$ and residue field $k$. Let $M$ be an  $A$-module and let $\rad \End_A(M)$ be the radical of $\End_A(M)$. 
Recall that a module $E$ is indecomposable if and only if $\End_A(E)$ is local; equivalently $\End_A(E)/\rad \End_A(E)$ is a division ring. We say $E$ is \textit{absolutely indecomposable} if $\End_A(E)/\rad \End_A(E) \cong k $. 

We need the following:
\begin{definition}
Suppose $(A,\m)$ is a Henselian  \ local ring of dimension $d \geq 0$ and residue field $k$.
Let $K \supseteq k$  be a field.  Then $\F(A,K)$ is the collection of Henselian local rings $(B,\n)$ such that 
 \begin{enumerate}
\item
\text{there is a flat local map}  $\phi \colon A \rt B$.
\item
$\m B = \n$.
\item
$B/\n \cong K$ over $k$.
\end{enumerate} 
\end{definition}
Note that by  \cite[App. Th\'{e}or\'{e}me 1, Corollaire]{BourIX} there exists a local ring $B$ (not-necessarily Henselian) satisfying (1), (2) and (3). Clearly  $\widehat{B} \in \F(A,K)$.

The word "absolutely indecomposable" is well chosen  thanks to the following result. 
\begin{theorem}
\label{abs-indec}
Let $(A,\m)$ be a Henselian local ring with perfect residue field $k$. Let $M$ be an $A$-module. The following are equivalent:
\begin{enumerate}[\rm (i)]
\item
$M$ is absolutely indecomposable.
\item
$M \otimes_A B$ is absolutely  indecomposable for every $B \in \F(A,K)$ for any extension field $K$ of $k$.
\item
$M \otimes_A B$ is indecomposable for every $B \in \F(A,K)$ for any extension field $K$ of $k$.
\item
$M \otimes_A B$ is indecomposable for some $B \in \F(A,K)$ with $K$ algebraically closed.
\end{enumerate}
\end{theorem}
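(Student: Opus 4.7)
The natural plan is to prove the cycle $(i) \Rightarrow (ii) \Rightarrow (iii) \Rightarrow (iv) \Rightarrow (i)$. Throughout I will use the identification $\End_B(M\otimes_A B) = \End_A(M)\otimes_A B$, valid since $M$ is finitely presented and $B$ is flat over $A$, together with Lemma \ref{equal-rad} (applicable precisely because $B \in \F(A,K)$ has $\m B = \n$ and $k$ is perfect).

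For $(i) \Rightarrow (ii)$: Set $\Gamma = \End_A(M)$, $\rr = \rad \Gamma$, and $\Lambda = \Gamma \otimes_A B = \End_B(M\otimes_A B)$. By Lemma \ref{equal-rad}, $\rad \Lambda = \rr \otimes_A B$, so
\[
\End_B(M\otimes_A B)/\rad = (\Gamma/\rr) \otimes_A B = k \otimes_A B = B/\m B = K,
\]
which is exactly the definition of absolute indecomposability for $M \otimes_A B$. The implication $(ii) \Rightarrow (iii)$ is immediate because $K$ is a field (in particular a division ring), and $(iii) \Rightarrow (iv)$ follows by invoking the remark after Definition \ref{abs-indec} (via the cited Bourbaki reference) to produce some $B \in \F(A,\ov{k})$ with algebraically closed residue field.

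The main content is $(iv) \Rightarrow (i)$. Assume $M \otimes_A B$ is indecomposable with $B \in \F(A,K)$ and $K$ algebraically closed. Since $B$ is Henselian, $\End_B(M\otimes_A B)$ is local, so $\End_B(M\otimes_A B)/\rad$ is a division ring. By Lemma \ref{equal-rad}, this division ring equals $(\Gamma/\rr) \otimes_A B$, and the identification $k \otimes_A B = K$ rewrites it as $(\Gamma/\rr) \otimes_k K$. Now $\Gamma$ is a finitely generated $A$-module, hence $\Gamma/\rr$ is a finite-dimensional semisimple $k$-algebra, i.e., a product $M_{n_1}(\Xi_1) \times \cdots \times M_{n_r}(\Xi_r)$ with each $\Xi_i$ a finite-dimensional division algebra over $k$. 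For its base-change to $K$ to be a division ring, we must first have $r=1$ and $n_1 = 1$, i.e., $\Gamma/\rr = \Xi$ is itself a division algebra over $k$.

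It remains to show $\Xi = k$. Let $F = Z(\Xi) \supseteq k$; since $k$ is perfect, $F/k$ is separable, so $F \otimes_k K \cong \prod_{i=1}^{s} K_i$ with each $K_i/K$ a finite field extension, forcing $K_i = K$ because $K$ is algebraically closed. Thus $\Xi \otimes_k K = \Xi \otimes_F (F \otimes_k K) = (\Xi \otimes_F K)^{s}$, and this being a division ring forces $s = 1$, i.e., $F = k$. So $\Xi$ is central simple over $k$, and $\Xi \otimes_k K$ is a finite-dimensional simple $K$-algebra by \cite[15.1]{Lam}, hence isomorphic to $M_n(K)$ by Wedderburn since $K$ is algebraically closed. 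Being also a division ring forces $n = 1$, so $\Xi \otimes_k K = K$, whence $[\Xi:k] = 1$ and $\Xi = k$, proving $(i)$. The only genuinely nontrivial step is this last division-algebra analysis; the other implications are direct consequences of Lemma \ref{equal-rad}.
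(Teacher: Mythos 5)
Your proposal is correct and follows essentially the same route as the paper: the same cycle of implications with Lemma \ref{equal-rad} as the key ingredient throughout. The only difference is in $(iv)\Rightarrow(i)$, where the paper argues more quickly by first noting that $\Lambda/\rad\Lambda$, being a finite-dimensional division algebra over the algebraically closed field $K$, must equal $K$, and then comparing $K$-dimensions in $\Xi\otimes_k K=K$ to get $\dim_k\Xi=1$; your more detailed analysis of the center $F=Z(\Xi)$ and of $\Xi\otimes_F K$ reaches the same conclusion.
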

\begin{proof}
Set $\End_A(M) = \Gamma$ and $\rr = \rad \Gamma$. For $B \in \F(A,K)$ set $\Lambda = \Gamma \otimes B$ and $\q = \rr \otimes B$.

$(i) \implies (ii)$. We have the exact sequence $0 \rt \rr \rt \Gamma \rt k$. Tensoring with $B$ yields
\begin{equation*}
0 \rt \q \rt \Lambda \rt K \rt 0. \tag{*}
\end{equation*}
By \ref{equal-rad} we have that $\q = \rad \Lambda$.
 It follows that  $M\otimes B$ is absolutely indecomposable. 

 $(ii) \implies (iii)$.  Clear.
 
 $(iii) \implies (iv)$. Clear.

$(iv) \implies (i)$.  Notice $\Lambda /\rad \Lambda$ is a divison algebra which is finite over $K$. As $K$ is algebraically closed we get that $\Lambda /\rad \Lambda = K$. 

 As $M\otimes B$ is indecomposable we have that $M$ is also indecomposable. Say $\Gamma /\rr = \Xi$ where $\Xi$ is a divison ring. Say  $\dim_k \Xi = r$. By \ref{equal-rad} we have that $\q = \rad \Lambda$. It follows that $\Xi\otimes_k K = K$. Computing dimensions  as vector space over $K$ we get that $r = 1$. So $\Xi = k$. Thus $M$ is absolutely indecomposable.
 
\end{proof}

\section{Proof of Theorem \ref{split} }
In this section we give a proof of Theorem \ref{split}. The essential ingredient is a construction which we now describe: 

\s \label{const} Let $(A,\m)$ be a Henselian local ring with perfect residue field $k$. Let $\ov{k}$ be the algebraic closure of $k$. Let  
$$\C_k = \{E \mid E \ \text{is a finite extension of } \ k, \ \text{and} \ E \subseteq \ov{k} \}.$$
Order $\C_k$ with the  inclusion as partial order. Note that $\C_k$ is a directed set, for if $E, F \in \C_k$ then the composite field $EF \in \C_k$ and clearly $EF \supseteq E$ and $EF \supseteq F$. We prove
\begin{theorem}
\label{basic-c}(with hypotheses as in \ref{const}) There exists a direct system of local rings $\{ (A^E,\m^E) \mid E \in \C_k\}$ such that
\begin{enumerate}[\rm (1)]
\item
$A^E$ is a finite flat extension with $\m A^E = \m^E$. Furthermore $A^E/\m^E  \cong E$ over $k$.
\item
$A^E$ is Henselian.
\item
For any $F, E \in \C_k$ with $F \subseteq E$ the maps in the direct system $\theta^E_F \colon A^F \rt A^E$ is flat and local with $\m^F A^E = \m^E$.
\end{enumerate}
\end{theorem}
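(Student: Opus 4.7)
\textbf{Proof plan for Theorem \ref{basic-c}.}

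The plan is to define $A^E$ explicitly as a simple monogenic extension $A[X]/(\phi_E(X))$, where $\phi_E$ is a monic lift to $A[X]$ of the minimal polynomial of a primitive element $\alpha_E \in E$, and to build the transition maps by repeated application of Hensel's lemma.

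Since $k$ is perfect, each $E \in \C_k$ is a finite separable extension of $k$, so I fix once and for all a primitive element $\alpha_E \in E \subseteq \ov{k}$ with minimal polynomial $\ov{\phi_E}(X) \in k[X]$, and lift $\ov{\phi_E}$ to a monic polynomial $\phi_E(X) \in A[X]$ of the same degree. Set $A^E := A[X]/(\phi_E(X))$. Then $A^E$ is a free $A$-module of rank $[E:k]$, and $A^E/\m A^E = k[X]/(\ov{\phi_E}(X)) = E$ is a field, so $\m^E := \m A^E$ is the unique maximal ideal and $A^E/\m^E \cong E$ over $k$. Since $A^E$ is a finite extension of the Henselian local ring $A$ with a single maximal ideal, $A^E$ is itself Henselian local. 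This gives items (1) and (2).

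For (3), fix $F \sub E$ in $\C_k$. Then $\alpha_F \in F \sub E = A^E/\m^E$ is a simple root of $\ov{\phi_F}$ (simplicity uses separability of $F/k$, so $\ov{\phi_F}'(\alpha_F) \neq 0$). Because $A^E$ is Henselian, Hensel's lemma produces a unique $\wt{\alpha}_F \in A^E$ with $\phi_F(\wt{\alpha}_F) = 0$ and $\wt{\alpha}_F \equiv \alpha_F \pmod{\m^E}$. I define $\theta^E_F \colon A^F \rt A^E$ by sending the class of $X$ to $\wt{\alpha}_F$; it is an $A$-algebra map, and from $\m^F = \m A^F$ we get $\theta^E_F(\m^F) \sub \m A^E = \m^E$, with $\m^F A^E = (\m A^F) A^E = \m A^E = \m^E$. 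To check flatness, I would realize $A^E$ as a monogenic extension of $A^F$ itself: pick a primitive element $\gamma$ for $E/F$, let $\ov{h}(Y) \in F[Y]$ be its minimal polynomial of degree $[E:F]$, and lift to a monic $h(Y) \in A^F[Y]$. Hensel's lemma in $A^E$ produces $\wt{\gamma}$ with $h(\wt{\gamma}) = 0$, yielding an $A^F$-algebra map
$$A^F[Y]/(h(Y)) \xar A^E, \qquad Y \mapsto \wt{\gamma},$$
which reduces on residue rings to the isomorphism $F[Y]/(\ov{h}(Y)) \cong E$. By Nakayama it is surjective, and since both source and target are free $A$-modules of the same rank $[E:k] = [F:k]\cdot [E:F]$, a rank comparison promotes it to an isomorphism. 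Hence $A^E$ is free of rank $[E:F]$ (in particular flat) over $A^F$.

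It remains to verify the direct system property. For $F \sub G \sub E$ in $\C_k$, both $\theta^E_G \circ \theta^G_F$ and $\theta^E_F$ send the class of $X$ in $A^F$ to an element of $A^E$ that is a root of $\phi_F$ and reduces to $\alpha_F$ modulo $\m^E$; the uniqueness clause of Hensel's lemma forces them to agree. The main obstacle is establishing flatness of $\theta^E_F$, which I handle by the monogenic presentation $A^E \cong A^F[Y]/(h(Y))$ above; once this is in hand, locality, preservation of the maximal ideal, and compatibility of the transition maps are all straightforward consequences of Hensel uniqueness.
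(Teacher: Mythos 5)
Your proposal is correct and follows the same overall construction as the paper: $A^E = A[X]/(\phi_E(X))$ with $\phi_E$ a monic lift of the minimal polynomial of a primitive element, $\m^E = \m A^E$ by Nakayama plus the finiteness of $A^E$ over $A$, Henselianness inherited from $A$, transition maps defined by the Hensel lift of $\alpha_F$ in $A^E$, and compatibility in towers from the uniqueness of that lift.

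The one place where you genuinely diverge is the flatness of $\theta^E_F$, which is indeed the only nontrivial point. The paper argues directly with bases: it takes generators $e_1,\dots,e_r$ of $A^E$ over $A^F$ (with $r = [E:F]$, via Nakayama) and generators $f_1,\dots,f_s$ of $A^F$ over $A$, observes that the $rs$ products $e_i f_j$ generate the free $A$-module $A^E$ of rank $rs$ and hence form an $A$-basis, and deduces that $e_1,\dots,e_r$ are $A^F$-independent, so $A^E$ is $A^F$-free. You instead produce a monogenic presentation $A^F[Y]/(h(Y)) \rt A^E$ by Hensel-lifting a primitive element of $E/F$, get surjectivity from Nakayama, and conclude it is an isomorphism by comparing $A$-ranks of free modules. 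Both are sound; your route has the mild aesthetic advantage of exhibiting $A^E$ as the same kind of object over $A^F$ that it is over $A$, while the paper's is more hands-on and avoids a second appeal to Hensel's lemma. Note also that your compatibility argument implicitly uses that the residue-field map induced by $\theta^E_G$ is the inclusion $G \sub E$ (so that the image of $\xi^G_F$ really reduces to $\alpha_F$ rather than to some conjugate); this holds because that map sends $\alpha_G \mapsto \ov{\xi^E_G} = \alpha_G$, and is worth a line in a full write-up.
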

The ring $T = \lim_{E\in \C_k} A^E$ will have nice properties  which enables us to prove Theorem \ref{split}.
\s \label{const-1}\textbf{Construction-1.1:}  For every  $E \in \C_k$  we construct a ring $A^E$ as follows.  As $k$ is perfect, $E$ is a separable extension of $k$. So by primitive element theorem  $E = k(\alpha_E)$ for some $\alpha_E \in E$. Let 
\[
p_E(X) = p_{E,\alpha_E}(X) = \mbox{Irr}(\alpha_E, k),
\]
be the unique monic minimal polynomial of $\alpha_E$ over $k$. Let $f_E(X) = f_{E, \alpha_E}(X)$ be a monic polynomial in $A[X]$ such that $\ov{f_E(X)} = p_E(X)$.
Set
\[
A^E = \frac{A[X]}{(f_E(X))}.
\]
Our construction of course depends on choice of $\alpha_E$ and the choice of $f_E(X)$. We will simply fix one choice of $\alpha_E$ and $f_E(X)$.
We prove:
\begin{proposition}\label{E-basic-1}
(with hypotheses as in \ref{const-1}) \begin{enumerate}[\rm (i)]
\item
$A^E$ is a finite flat extension of $A$.
\item
$A^E$ has a unique maximal ideal $\m^E$. Furthermore 
\begin{enumerate}[\rm (a)]
\item
$\m A^E = \m^E$.
\item
$A^E/\m^E \cong E$.
\end{enumerate}
\item
$A^E$ is a Henselian local ring.
\end{enumerate}
\end{proposition}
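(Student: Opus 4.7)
The plan is to verify the three claims in order, each following from standard properties of finite extensions of local rings together with the Henselian hypothesis on $A$.

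For (i), since $f_E(X)$ is monic of degree $n = \deg p_E = [E:k]$, the quotient $A[X]/(f_E(X))$ is automatically free over $A$ with basis $\{1, X, \ldots, X^{n-1}\}$. So $A^E$ is a finite flat $A$-algebra with no further work.

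For (ii), the key computation is
\[
A^E/\m A^E \;\cong\; k[X]/(\overline{f_E(X)}) \;=\; k[X]/(p_E(X)) \;\cong\; E,
\]
since by construction $\overline{f_E(X)} = p_E(X)$ is irreducible in $k[X]$. Hence $\m A^E$ is a maximal ideal whose residue field is canonically identified with $E$. Being a finite algebra over the local ring $A$, every maximal ideal of $A^E$ contracts to $\m$ and therefore contains $\m A^E$; combined with the maximality of $\m A^E$ itself, this forces $\m A^E$ to be the unique maximal ideal, giving $\m^E = \m A^E$ and $A^E/\m^E \cong E$. Note that Henselianity of $A$ is not yet used at this point.

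For (iii), I would invoke the standard characterization that a local ring is Henselian exactly when every finite algebra over it decomposes as a finite product of local rings. Given this, any finite $A^E$-algebra $C$ is automatically finite over $A$ (via the finite extension $A \subseteq A^E$ from (i)), so it splits as $C = \prod C_i$ with each $C_i$ local by the Henselianity of $A$. Each $C_i$ inherits the structure of a local $A^E$-algebra, which is precisely the condition needed to conclude $A^E$ is Henselian.

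I do not anticipate any serious obstacle here; the proposition is a clean packaging of standard material about finite extensions of Henselian local rings, and the only place where the Henselian hypothesis on $A$ is actually needed is the product-of-locals argument in (iii).
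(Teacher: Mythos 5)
Your proof is correct and follows essentially the same route as the paper: the freeness of $A[X]/(f_E(X))$ over $A$ for (i), the computation $A^E/\m A^E \cong k[X]/(p_E(X)) \cong E$ plus the fact that every maximal ideal of a finite extension contracts to $\m$ for (ii), and the ``finite algebras split into products of local rings'' characterization of Henselian rings for (iii). No gaps.
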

\begin{proof}
$(i)$ This is clear.\\
$(ii)$ Notice 
$$A^E/\m A^E = k[X]/(p_E(X)) \cong E.$$
It follows that $\m A^E$ is a maximal ideal of $A^E$. 
Also as $A^E$ is a finite extension of $A$ any maximal ideal $\n$ of $A^E$ will contain $\m$.
It  follows that $\m^E = \m A^E$ is the unique maximal ideal of $A^E$. Clearly  $(a), (b)$ hold.\\
$(iii)$. Let $S$ be a finite $A^E$-algebra. Then note that $S$ is a finite $A$-algebra. As $A$ is Henselian we get that $S$ is a product of local rings. Thus $A^E$ is Henselian.
\end{proof}

\s \label{const-2}\textbf{Construction-1.2:}
Let $k \subseteq F \subseteq E$ be a tower of fields. We construct a ring homomorphism $\theta^E_F \colon A^F \rt A^E$ as follows:
Notice $\alpha_F \in E$. It follows that $P_F(Y) = (Y - \alpha_F)h(Y)$ for some polynomial $h(Y) \in E[Y]$. As $F$ is separable over $k$, the roots of $P_F(Y)$ are all distinct. It follows that $(Y - \alpha_F, h(Y)) = 1$. The ring $A^E$ is Henselian. So there exists $\xi \in A^E$ and $g(Y) \in A^E[Y]$ such that $\ov{\xi} = \alpha_F$, $\ov{g(Y)} = h(Y)$ and $f_F(Y) = (Y-\xi)g(Y)$.\\
\textit{Claim-1:} If $\xi^\prime \in A^E$ such that $f_F(\xi^\prime) = 0$ and $\ov{\xi^\prime} = \alpha_F$ then $\xi^\prime = \xi$.\\
Note that $0 = f_F(\xi^\prime) = (\xi^\prime - \xi)g(\xi^\prime)$. As $h(\alpha_F) \neq 0$ we get that $g(\xi^\prime)$ is a unit in $A^E$. So $\xi^\prime  = \xi$. \\
\textbf{Notation:} Denote $\xi$ as $\xi^E_F$. \\
Define 
\begin{align*}
\theta^E_F \colon A^F &\rt A^E, \\
a &\rt a \ \text{for all} \ a \in A,\\
X &\rt \xi^E_F
\end{align*}
We prove:
\begin{proposition}\label{E-basic-2}
(with hypotheses as in \ref{const-2})
\begin{enumerate}[\rm (i)]
\item
$\theta^E_F$ is a homomorphism of $A$-algebra's.
\item
$\theta^E_F$ is a local map
and
$\m^FA^E = \m^E$.
\item
$A^E$ is a flat $A^F$-module (via $\theta^E_F$).
\item
If $k \subseteq F \subseteq  E \subseteq L$ is a tower of fields then we have a commutative diagram 
\[
\xymatrix{ 
A^F
\ar@{->}[d]_{\theta^E_F} 
\ar@{->}[dr]^{\theta^L_F}
 \\ 
A^E
\ar@{->}[r]_{\theta^L_E} 
& A^L
}
\]
\end{enumerate}
\end{proposition}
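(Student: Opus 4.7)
Parts (i) and (ii) should follow immediately from the construction. For (i), giving an $A$-algebra map out of $A^F = A[X]/(f_F(X))$ amounts to specifying an element of $A^E$ annihilated by $f_F$, and by the very construction of $\xi^E_F$ in Construction-1.2, $f_F(\xi^E_F) = 0$, so $\theta^E_F$ is well defined. For (ii), since $\m^F = \m A^F$ by Proposition \ref{E-basic-1}(ii) and $\theta^E_F$ restricts to the identity on $A$, we have $\m^F A^E = \m A^E = \m^E$; in particular $\theta^E_F$ is local.

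The main obstacle is (iii), flatness. My plan is to exhibit $A^E$ as an $A^F$-module direct summand of the base change $A^F \otimes_A A^E$. Writing $A^F \otimes_A A^E = A^E[Y]/(f_F(Y))$ and using the factorization $f_F(Y) = (Y - \xi^E_F) g(Y)$ from Construction-1.2, I would first check that $(Y - \xi^E_F)$ and $g(Y)$ generate the unit ideal in $A^E[Y]$: their reductions modulo $\m^E$ are $Y - \alpha_F$ and $h(Y)$, already noted to be coprime in $E[Y]$, and since $A^E[Y]/(f_F(Y))$ is a finite $A^E$-module, Nakayama promotes this coprimality to $A^E[Y]$ itself. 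The Chinese Remainder Theorem then yields
$$A^F \otimes_A A^E \cong A^E \times \bigl(A^E[Y]/(g(Y))\bigr),$$
under which the image of $X \in A^F$ becomes $(\xi^E_F, \ov{Y})$. Multiplication by this element preserves each factor, so the decomposition is a direct sum of $A^F$-modules, and the first factor carries exactly the $A^F$-action coming from $\theta^E_F$. Since $A \to A^E$ is flat, so is the base change $A^F \to A^F \otimes_A A^E$; as a direct summand of a flat $A^F$-module, $A^E$ is flat over $A^F$.

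For (iv), I would prove the stronger statement $\theta^L_E(\xi^E_F) = \xi^L_F$; the diagram then commutes because both composites are $A$-algebra maps that send the generator $X$ of $A^F$ to the same element. Setting $\xi = \theta^L_E(\xi^E_F) \in A^L$, I would verify the two defining conditions from Claim-1 of Construction-1.2: $f_F(\xi) = \theta^L_E(f_F(\xi^E_F)) = 0$, and $\ov{\xi} = \alpha_F$ in $L$ (because $\theta^L_E$ is local, so its reduction factors as $A^E \to E \hookrightarrow L$, and $\ov{\xi^E_F} = \alpha_F$). The uniqueness assertion in Claim-1, applied to the tower $k \subseteq F \subseteq L$, then yields $\xi = \xi^L_F$.
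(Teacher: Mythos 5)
Your proof is correct, and parts (i), (ii) and (iv) run essentially as in the paper: for (iv) the paper likewise reduces everything to the identity $\theta^L_E(\xi^E_F)=\xi^L_F$, obtained by applying $\theta^L_E$ to the factorization $f_F(Y)=(Y-\xi^E_F)g(Y)$ and invoking the uniqueness assertion of Claim-1 (the paper's display writes $\theta^E_F$ where $\theta^L_E$ is meant, a typo you have silently corrected). The genuine divergence is in (iii). The paper proves the stronger statement that $A^E$ is \emph{free} over $A^F$ of rank $r=[E:F]$: by Nakayama it picks $r$ generators $e_1,\dots,e_r$ of $A^E$ over $A^F$ and $s=[F:k]$ generators $f_1,\dots,f_s$ of $A^F$ over $A$, notes that the $rs$ products $e_if_j$ generate the $A$-free module $A^E$ of rank $rs=[E:k]$ and hence form an $A$-basis, and deduces that the $e_i$ are $A^F$-independent. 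You instead split $A^F\otimes_A A^E=A^E[Y]/(f_F(Y))$ by the Chinese Remainder Theorem using the Hensel factorization $f_F=(Y-\xi^E_F)g$, identify the factor $A^E[Y]/(Y-\xi^E_F)$ with $A^E$ carrying precisely the module structure induced by $\theta^E_F$, and conclude because a direct summand of the flat base change $A^F\otimes_A A^E$ is flat; the comaximality of $(Y-\xi^E_F)$ and $(g)$ is correctly justified by passing to the finite $A^E$-module $A^E[Y]/(f_F)$ before applying Nakayama. Both arguments are complete. The paper's counting argument yields freeness directly (though for a finite module over a Noetherian local ring flatness and freeness coincide, so nothing is lost either way), while yours avoids the basis bookkeeping and reuses the Hensel lifting already built in Construction~\ref{const-2}.
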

\begin{proof}
$(i)$ This is clear since $\theta^E_F(a) = a$ for each $a \in A$. \\
$(ii).$ As $\theta^E_F$ fixes $A$ we get that $\theta^E_F(\m_F) = \theta^E_F(\m A^F) \subseteq \m A^E = \m^E$. Thus $\theta^E_F$ is local. 

Also note that $\m_F A^E = \m A^F A^E = \m A^E = \m^E$.

$(iii).$  Suppose $\dim_F E = r$ and $\dim_k F = s$. Then $\dim_k E = rs$. Notice
$A^E/\m^F A^E = E$ it follows that $A^E$ is generated as an $A^F$ module by $r$-elements; say $\{ e_1,\ldots,e_r\}$. Similarly $A^F$ is generated as an $A$-module by $s$ elements; say $\{ f_1,\ldots, f_s \}$. It follows that $A^E$ is generated over $A$
by $\{ e_if_j \mid 1 \leq i \leq r, 1\leq j \leq s \}$. However $A^E$ is a free $A$-module of rank $\dim_k E = rs.$ It follows that $\{ e_if_j \}_{i,j}$ is a basis of the $A$-module $A^E$.\\
\textit{Claim:} $\{e_1,e_2,\ldots,e_r \}$ is a basis of $A^E$ over $A^F$. \\
We already have that $A^E$ is generated as an $A^F$ module by $\{e_1,e_2,\ldots,e_r \}$. Suppose
\[
\beta_1 e_1 + \beta_2 e_2 + \cdots + \beta_r e_r = 0 \quad \text{for some} \ \beta_i \in A^F.
\]
Write 
\[
\beta_i = \sum_{j = 1}^{s}\gamma_{ij}f_j \quad \text{for some} \ \gamma_{ij} \in A.
\]
It follows that
\[
\sum_{i,j} \gamma_{ij}e_if_j = 0
\]
As $\{ e_if_j\}_{i,j}$ is a basis of the $A$-module $A^E$ we get that $\gamma_{ij} = 0$ for all $i,j$. It follows that $\beta_i = 0$ for all $i$. Thus 
$\{e_1,e_2,\ldots,e_r \}$ is a basis of $A^E$ over $A^F$. 

$(iv)$. Note $f_F(Y) = (Y - \xi^E_F)g(Y)$ in $A^E[Y]$. Applying $\theta^E_F$ and noting that it fixes $A$ we get
\[
f_F(Y) = (Y - \theta^E_F(\xi^E_F))\theta(g(Y)) \quad \text{in} \ A^L[Y].
\]
Notice $\ov{\theta^E_F(\xi^E_F)} = \alpha_F$. So by uniqueness we get 
$$ \xi^L_F = \theta^E_F(\xi^E_F).$$
The result follows.
\end{proof}
As a consequence we get
\begin{proof}
[Proof of Theorem \ref{basic-c}] This follows from \ref{E-basic-1} and \ref{E-basic-2}.
\end{proof}

\s \label{const-3} \textbf{Construction-1.3:}
Set 
\[
T = \lim_{E \in \C_k} A^E,
\]
and let $\theta_E \colon E \rt T$ be the maps such that for any $F\subseteq E$ in $\C_k$ we have $\theta_E \circ \theta^E_F = \theta_F$.
For $F \in \C_k$ set
$$ \C_F = \{E \mid E \ \text{is a finite extension of } \ F \}.$$
Then clearly $\C_F$ is cofinal in $\C_k$. Thus we have
\[
T = \lim_{E \in \C_F} A^E.
\]
We have the following properties of $T$.
\begin{theorem}\label{prop-T}
(with hypotheses as in \ref{const-3})
\begin{enumerate}[\rm (i)]
\item
$T$ is a Noetherian ring.
\item
$T$ is a flat $A$-module. 
\item
$T$ is a flat $A^F$-module for any $F \in \C_k$.
\item
The map $\theta_E$ is injective for any $E \in \C_k$. 
\item
By (iv)
we may write $T = \bigcup_{E \in \C_k}A^E$. Set $\m^T = \bigcup_{E \in \C_k}\m^E$.
Then $\m^T$ is the unique maximal ideal of $T$. 
\item
 $\m T = \m^T$.
\item
$T/\m^T \cong \ov{k}$.
\item
$T$ is a Henselian ring.
\end{enumerate}
\end{theorem}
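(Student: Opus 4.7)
The plan is to recognize $T$ as a strict Henselization $A^{\mathrm{sh}}$ of $A$ and then deduce most properties directly from its description as a filtered colimit. Since $k$ is perfect, $p_E(X) = \overline{f_E(X)}$ is separable, so the discriminant of $f_E$ is a unit in $A$ and each $A^E$ is finite \'etale local over $A$; as $E$ ranges over $\C_k$ these realize all finite extensions of $k$ as residue fields, matching the standard construction of $A^{\mathrm{sh}}$. Parts (ii) and (iii) then follow from the general fact that filtered colimits of flat modules are flat, together with the freeness of each transition map $\theta^E_F$ (\ref{E-basic-2}(iii)), giving $T$ flat over every $A^F$ and in particular over $A$. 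For (iv), each $\theta^E_F$ is injective (being a faithfully flat local map), and filtered colimits preserve injectivity of the canonical maps into the colimit.

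For (v)--(vii), I would use the description $T = \bigcup_E A^E$ together with $\m^F A^E = \m^E$ for $F \subseteq E$ (\ref{E-basic-2}(ii)): the union $\m^T := \bigcup_E \m^E$ is then an ideal of $T$, and any $t \in T \setminus \m^T$ lies in some $A^E \setminus \m^E$ and is therefore a unit in $A^E$ (hence in $T$), so $\m^T$ is the unique maximal ideal; the identities $\m T = \bigcup_E \m^E = \m^T$ and $T/\m^T = \bigcup_E E = \overline{k}$ are then immediate. For (viii), I would apply the polynomial-factorization criterion for Henselian rings: given a monic $p(X) \in T[X]$ with $\overline{p}(X) = q(X) r(X)$ coprime in $\overline{k}[X]$, choose $E \in \C_k$ large enough that all coefficients of $p$ lie in $A^E$ and $q, r \in E[X]$; since $A^E$ is Henselian (\ref{E-basic-1}), the coprime factorization lifts to $A^E[X]$ and \emph{a fortiori} to $T[X]$.

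The main obstacle is (i), Noetherianness --- the classical nontrivial theorem that the strict Henselization of a Noetherian local ring is Noetherian. A self-contained proof would first show that the $\m^T$-adic completion $\widehat{T}$ is Noetherian (identifying $\widehat{T}$ with a suitable base change of $\widehat{A}$ via flatness of $T$ over $A$ together with $\m T = \m^T$, and invoking the easier fact that the strict Henselization of a complete Noetherian local ring is Noetherian), and would then descend Noetherianness from $\widehat{T}$ to $T$ using faithful flatness of $\widehat{T}$ over $T$.
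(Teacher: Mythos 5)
Your proposal is correct in substance and, for parts (ii)--(viii), follows essentially the same route as the paper: flatness via filtered colimits (the paper phrases this as $\Tor^A_i(\varinjlim A^E, N) = \varinjlim \Tor^A_i(A^E,N) = 0$), injectivity of the transition maps for (iv), the same direct union arguments for (v)--(vii), and the same ``descend the factorization to a finite level $A^K$ and lift there'' argument for (viii). The genuine divergence is in (i): the paper simply cites EGA III, Chap.~0, (10.3.1.3), the standard result that a filtered colimit of Noetherian local rings along flat local maps satisfying $\m^F A^E = \m^E$ is Noetherian, whereas you propose to reprove that citation via the completion. Your outline is the standard proof of that EGA lemma, but be aware that the step you state as ``descend Noetherianness from $\widehat{T}$ to $T$ using faithful flatness of $\widehat{T}$ over $T$'' hides the only delicate point: since $T$ is not yet known to be Noetherian, you cannot quote the usual fact that $R \to \widehat{R}$ is flat. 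One has to prove flatness of $T \to \widehat{T}$ directly, e.g.\ by showing $\widehat{T}$ is flat over each $A^E$ (local criterion of flatness for the complete module $\widehat{T}$, using that $\widehat{T}/\m^n\widehat{T} = T/\m^n T$ is free over $A^E/\m^n A^E$) and then deducing $\Tor_1^T(\widehat{T}, T/J) = 0$ for every finitely generated ideal $J$ of $T$ by descending $J$ to some $A^E$ and applying flat base change of Tor along $A^E \to T$. Your Noetherianness of $\widehat{T}$ itself is fine ($\m\widehat{T}$ is finitely generated, $\widehat{T}$ is complete, and $\widehat{T}/\m\widehat{T} = \ov{k}$), as is the final descent once faithful flatness is in hand, since $J\widehat{T} \cap T = J$ for faithfully flat maps. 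The identification of $T$ with the strict Henselization of $A$ is accurate and good motivation, though neither needed nor used by the paper.
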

\begin{proof}
$(i)$. As $A^F \rt A^E$ is flat whenever $F \subseteq E$ and $\m^F A^E = \m^E$ we get that $T$ is Noetherian, see \cite[Chap.0, (10.3.13)]{EGA3}.

$(ii)$. Let $N$ be any $A$-module.
Notice for any $i \geq 1$ we have
\begin{align*}
\Tor^A_i(T,N) &= \Tor^A_i(\lim_{E \in C_k}A^E, N), \\
 &=\lim_{E \in C_k}\Tor^A_i( A^E, N) \\
 &= 0.
\end{align*}
Thus $T$ is a flat as an $A$-module.\\
$(iii)$. This is similar to $(ii)$.\\
$(iv)$. This follows since each map $\theta^E_F$ in the direct limit is injective.
\\
$(v)$. It is clear that $\m^T$ is an ideal in $T$. Suppose $\xi \notin \m^T$. Then $\xi \notin \m^E$ for some $E$. This implies that $\xi$ is a unit in $A^E$. So $\xi$ is a unit in $T$. Thus $\m^T$ is the unique maximal ideal of $T$.

$(vi)$. Clearly $\m T \subseteq \m^T$. Let $\xi \in \m^T$. Then $\xi \in \m^E$ for some $E \in \C_k$. But $\m^E = \m A^E$. It follows that $\xi \in \m T$. Thus $\m T  = \m^T$.

$(vii).$ The inclusion $\theta^E \colon A^E \rt T $ is flat local map of $A$-algebras and so induces an inclusion of fields $ \ov{\theta^E} \colon E \rt T/\m^T$ over $k$. It follows that $L = T/\m^T$ contains $\ov{k}$. Let $\ov{\xi} \in L$. Let $\xi \in T$ be its pre-image. Say 
$\xi \in A^E$. Then notice the map $\ov{\theta^E}$ maps $\xi + \m^E$ to $\ov{\xi}$. It follows that $L = \ov{k}$.

$(viii)$.  Let $f(Y) \in T[Y]$ be a monic polynomial such that its residue class $\ov{f(Y)}$ modulo $\m^T T[Y]$ has a factorization $\ov{f} = g^\prime h^\prime$ with monic polynomials $g^\prime, h^\prime \in T/\m^T[Y]$ and $(g^\prime, h^\prime) = 1$. 
By $(v)$ there exists $E \in \C_k$ such that $f(Y) \in A^E[Y]$. We may assume that all coefficients of $g^\prime, h^\prime \in F$ for some $F \in \C_k$. Set $K = EF$. Then note that $f \in A^K[Y]$ and modulo $\m^K K[Y]$ we have a factorization  $\ov{f} = g^\prime h^\prime$. As $A^K$ is Henselian we have that there exists monic polynomials 
$g,h \in A^K[Y]$ with $f = gh$ and $\ov{g} = g^\prime$ and $\ov{h} = h^\prime$. Now note that $g, h \in T[Y]$. 
\end{proof}

The significance of $T$ is that certain crucial properties descend to a finite extension $E$ of $k$.
\begin{lemma}\label{l-p-t}
(with hypotheses as above) 
\begin{enumerate}[\rm (1)]
\item
Let $M$ be a  $T$-module. Then there exists $E \in \C_k$ and an  $A^E$-module $N$ such that $M = N\otimes_{A^E} T$.
\item
Let $N_1,N_2$ be $A^E$-modules for some $E \in \C_k$. Suppose there is a $T$-linear map $f \colon N_1\otimes_{A^E} T \rt N_2 \otimes_{A^E} T$. Then there exists $K \in \C_k$ with $K \supseteq  E$ and an $A^K$-linear map $g \colon N_1\otimes_{A^E} A^K \rt N_2 \otimes_{A^E} A^K$ such that $f = g\otimes T$. Furthermore if $f$ is an isomorphism then so is $g$.
\end{enumerate}
\end{lemma}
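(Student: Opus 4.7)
The key fact is that by Theorem \ref{prop-T}(v), $T = \bigcup_{E \in \C_k} A^E$ is a filtered colimit of the $A^E$, and by Theorem \ref{prop-T}(iii) the transition maps and the map to $T$ are flat.

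For (1), take a finite presentation of $M$ as a $T$-module:
\[
T^m \xrightarrow{\Phi} T^n \rt M \rt 0.
\]
The map $\Phi$ is given by an $n\times m$ matrix with finitely many entries in $T$. Since $T = \bigcup_E A^E$ is a filtered union and the collection of $E \in \C_k$ is directed, there exists $E \in \C_k$ containing all these entries. Let $\Phi_E \colon (A^E)^m \rt (A^E)^n$ be the corresponding map over $A^E$, and set $N = \operatorname{coker}(\Phi_E)$. Since $T$ is flat over $A^E$ (and tensoring with $T$ is right exact), applying $- \otimes_{A^E} T$ to the presentation of $N$ recovers the presentation of $M$, so $N \otimes_{A^E} T \cong M$.

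For (2), pick finitely many generators $x_1,\ldots,x_n$ of $N_1$ and finitely many relations $\sum_i a_{ji}x_i = 0$ with $a_{ji} \in A^E$ defining a presentation $(A^E)^r \rt (A^E)^n \rt N_1 \rt 0$. Because tensor products commute with filtered colimits,
\[
N_2 \otimes_{A^E} T = \lim_{K \supseteq E} N_2 \otimes_{A^E} A^K,
\]
so each $f(x_i \otimes 1) \in N_2 \otimes_{A^E} T$ is the image of some $z_i^{(i)} \in N_2 \otimes_{A^E} A^{K_i}$. By directedness, choose a single $K_0 \supseteq E$ containing all $K_i$, and view the $z_i$ in $N_2 \otimes_{A^E} A^{K_0}$. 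This yields a homomorphism $\widetilde g \colon (A^{K_0})^n \rt N_2 \otimes_{A^E} A^{K_0}$. To show $\widetilde g$ factors through $N_1 \otimes_{A^E} A^{K_0}$, we must check each relation $\sum_i a_{ji}z_i = 0$. This holds after applying $- \otimes_{A^{K_0}} T$ because $f$ respects the relations, so again using the colimit description there is $K \supseteq K_0$ in which all (finitely many) relations already vanish. The resulting $g \colon N_1 \otimes_{A^E} A^K \rt N_2 \otimes_{A^E} A^K$ satisfies $g \otimes T = f$ by construction.

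For the furthermore, assume $f$ is an isomorphism, and set $C = \operatorname{coker}(g)$ and $L = \ker(g)$, both finitely generated $A^K$-modules (here we use that $A^K$ is Noetherian and that $N_1 \otimes_{A^E} A^K$ is finitely generated). Flatness of $T$ over $A^K$ gives $C \otimes_{A^K} T = \operatorname{coker}(f) = 0$ and $L \otimes_{A^K} T = \ker(f) = 0$. Applying the filtered-colimit argument once more: finitely many generators of $C$ (respectively $L$) each vanish at some finite $A^{K'}$ level, and taking $K'$ to dominate these finitely many levels gives $C \otimes_{A^K} A^{K'} = 0$ and $L \otimes_{A^K} A^{K'} = 0$. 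Replacing $K$ by $K'$, the map $g$ becomes both injective and surjective, i.e., an isomorphism. The main subtlety is simply bookkeeping: repeatedly passing to larger $K \in \C_k$ to absorb entries, relations, and vanishing witnesses, which works because $\C_k$ is directed and each step involves only finitely many elements.
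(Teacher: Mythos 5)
Your proof is correct and follows essentially the same route as the paper: descend a finite presentation for part (1), and descend the finitely many data defining $f$ for part (2), using directedness of $\C_k$ and flatness of $T$ over each $A^K$. The only cosmetic difference is in the ``furthermore'' step, where the paper concludes $\ker g = 0$ and $\operatorname{coker} g = 0$ immediately from faithful flatness of $T$ over $A^K$ (no enlargement of $K$ needed), whereas you pass to a larger $K'$ to kill them; both arguments work.
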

\begin{proof}
$(1)$ Let $F_1 \xrightarrow{\phi} F_0 \rt M \rt 0$ be a finite presentation of $M$. Say $\phi = (a_{ij})$. Then by \ref{prop-T}-(v) there exists $E \in \C_k$ such that all $a_{ij} \in A^E$. Consider a presentation $ G_1\xrightarrow{\phi} G_0 \rt N \rt 0$
of $A^E$-module $N$. Clearly $N \otimes_{A^E}T \cong M$.

$(2)$. Notice $\Hom_{T}(N_1\otimes_{A^E}T, N_2\otimes_{A^E}T) \cong \Hom_{A^E}(N_1, N_2)\otimes_{A^E} T$. Thus 
\[
f = f_1\otimes \xi_1 +  \cdots + f_s\otimes \xi_s, \quad \text{for some} \ f_i \in \Hom_{A^E}(N_1, N_2) \ \text{and} \ \xi_i \in T.
\]
Then by \ref{prop-T}-(v) there exists $F \in \C_k$ such that all $\xi_i \in A^F$. 
Let $K = EF$. Set
$$g = f_1\otimes \xi_1 +  \cdots + f_s\otimes \xi_s \in \Hom_{A^E}(N_1, N_2)\otimes_{A^E} A^K.$$
Clearly $g \otimes T = f$. 

Let $U,V$ be the kernel and cokernel of $g$. If $f$ is an isomorphism then 
$U\otimes_{A^K} T = V \otimes_{A^K} T = 0$. By \ref{prop-T}-(iii) we have that $T$ is a faithfully flat extension of $A^K$. It follows that $U= V =0$. Thus $g$ is an isomorphism.
\end{proof}
We now give
\begin{proof}
[Proof of Theorem \ref{split}]
We make the construction as in \ref{basic-c}. Let $M\otimes_A T = L_1^{r_1} \otimes \cdots \oplus L_m^{r_m}$ where $L_1,\cdots,L_m$ are indecomposable $T$-modules. By \ref{l-p-t} it follows that there exists $E \in \C_k$ and $A^E$-modules $N_i$ with $N_i \otimes T = L_i$ for $i = 1,\ldots,m$. By \ref{abs-indec} it follows that $N_i$ are absolutely indecomposable. 
Notice
\[
M\otimes_A T = (M\otimes_A A^E) \otimes_{A^E} T \cong \left(\bigoplus_{i=1}^{s} N_i^{r_i} \right)\otimes_{A^E} T.
\] 
By \ref{l-p-t} there exists $K \in \C_k$ with $K \supseteq E$ such that
\[
M\otimes_{A^K} A^K = (M\otimes_{A} A^E)\otimes_{A^E} A^K \cong  \left(\bigoplus_{i=1}^{s} N_i^{r_i} \right)\otimes_{A^E} A^K =  \left(\bigoplus_{i=1}^{s} N_i^{r_i} \otimes_{A^E} A^K \right).
\]
By \ref{abs-indec} the $A^K$-modules $N_i \otimes_{A^E} A^K$ are absolutely indecomposable. We take $R = A^K$. Note that $R$ has the required form by \ref{const-1}. By \ref{gdim-basic} we get that $\gldim \End_A(M) = \gldim \End_R(M\otimes_A R)$.
\end{proof}
The following result will be useful later.
\begin{lemma}\label{uuu}
Let $\phi \colon (A,\m) \rt (B,\n)$ be a flat local map of Henselian rings. Assume the  residue fields $k,l$ of $A$ and $B$ are perfect. Let $M$ be an $A$ module. Then there exists a commutative diagram of flat local maps of Henselian rings
\[
\xymatrix{
A \ar@{->}[r]^{\eta}
     \ar@{->}[d]^\phi
&A^\prime 
     \ar@{->}[d]^{\phi^\prime}
\\
B \ar@{->}[r]^{\delta}
     &B^\prime      
}
\]
such that
\begin{enumerate}[\rm (1)]
\item
$(A^\prime,\m^\prime)$ is a finite flat extension of $A$ with $\m A^\prime = \m^\prime$. 
\item
$M\otimes A^\prime$ is a direct sum of absolutely indecomposable $A^\prime$-modules.
\item
 $(B^\prime,\n^\prime)$ is a finite flat extension of $B$ with $\n B^\prime = \n^\prime$. 
 \item
 If the fiber $F$ of $\phi$ is regular local then so is the fiber $F^\prime$ of $\phi^\prime$.
\end{enumerate}
\end{lemma}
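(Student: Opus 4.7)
The plan is to first apply Theorem \ref{split} to produce $A^\prime$ handling conditions (1) and (2), and then to obtain $B^\prime$ as one local factor of $B\otimes_A A^\prime$, split off by Hensel's lemma in $B[X]$.

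By Theorem \ref{split} applied to $M$ over $A$ there is $A^\prime = A[X]/(\phi(X))$, with $\phi(X)$ monic and $\ov{\phi(X)}$ irreducible in $k[X]$, such that $M\otimes_A A^\prime$ is a finite direct sum of absolutely indecomposable $A^\prime$-modules; by Proposition \ref{E-basic-1} this $A^\prime$ is finite flat Henselian local over $A$ with $\m A^\prime = \m^\prime$, so (1) and (2) hold. Let $\eta\colon A\hookrightarrow A^\prime$ be the inclusion. Next view $\phi(X) \in B[X]$ and reduce modulo $\n$: since $k$ is perfect, $\ov{\phi(X)} \in k[X]$ is separable, hence remains separable in $l[X]$, where it factors as $p_1(X)\cdots p_r(X)$ with distinct monic irreducibles. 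Because $B$ is Henselian, this factorization lifts to $\phi(X) = \phi_1(X)\cdots\phi_r(X)$ in $B[X]$ with each $\phi_i$ monic and $\ov{\phi_i} = p_i$, giving
$$ B\otimes_A A^\prime \;=\; B[X]/(\phi(X)) \;\cong\; \prod_{i=1}^{r} B[X]/(\phi_i(X)). $$
Set $B^\prime = B[X]/(\phi_1(X))$, let $\delta \colon B\hookrightarrow B^\prime$ be the inclusion, and let $\phi^\prime \colon A^\prime \to B^\prime$ be the composite $A^\prime \to B\otimes_A A^\prime \twoheadrightarrow B^\prime$. By Proposition \ref{E-basic-1} applied to $B$, the ring $B^\prime$ is finite flat Henselian local over $B$ with $\n B^\prime = \n^\prime$, establishing (3). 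Flatness of $\phi^\prime$ over $A^\prime$ follows because $B\otimes_A A^\prime$ is flat over $A^\prime$ by base change and a finite direct product of rings is flat over a subring if and only if each factor is; locality of $\phi^\prime$ is immediate from $\phi^\prime(\m^\prime) \subseteq \m B^\prime \subseteq \n^\prime$, and the square commutes by construction.

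For (4), assume $F = B/\m B$ is regular local. Compute $F^\prime = B^\prime/\m B^\prime \cong F[X]/(\widetilde{\phi_1}(X))$, where $\widetilde{\phi_1}$ is the image of $\phi_1$ in $F[X]$; the further reduction of $\widetilde{\phi_1}$ modulo the maximal ideal $\n/\m B$ of $F$ is $p_1(X) \in l[X]$, which is irreducible. Since $F$ is Henselian (as a quotient of the Henselian ring $B$) with perfect residue field $l$, Proposition \ref{E-basic-1} applied to $F$ gives that $F^\prime$ is a finite flat Henselian local extension of $F$ whose maximal ideal is $(\n/\m B)F^\prime$; then a regular system of parameters of $F$ generates the maximal ideal of $F^\prime$, and $\dim F^\prime = \dim F$ by flatness and finiteness of the residue field extension, so $F^\prime$ is regular local. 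The only delicate point is this double reduction in (4), namely verifying that the residue-field extension at the level of fibers is unramified so that a regular system of parameters of $F$ survives as one in $F^\prime$; once that is in place, every other requirement reduces to invoking Theorem \ref{split} and Proposition \ref{E-basic-1}.
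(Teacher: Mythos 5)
Your proof is correct and follows essentially the same route as the paper's: $A'$ is produced by Theorem \ref{split}, and $B'$ is taken to be a local factor of the finite extension $B\otimes_A A'$, which is unramified over $B$ because the perfectness of the residue fields makes $\ov{\phi(X)}$ separable, so that $\n B'=\n'$ and the fiber $F'$ is a finite flat extension of $F$ with extended maximal ideal, hence regular. The only differences are cosmetic: you exhibit the local factor explicitly by Hensel-factoring $\phi(X)$ in $B[X]$ and check regularity of $F'$ by counting parameters, where the paper decomposes $B\otimes_A A'$ abstractly and cites \cite[2.5]{W} and \cite[23.7]{Mat} for the same two points.
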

\begin{proof}
$(1)$ and $(2)$: Let $A^\prime = A[X]/(f(X))$ be as in Theorem \ref{split}.

$(3)$. Set $\widetilde{B} = B\otimes A^\prime$. Then clearly $\widetilde{B}$ is a flat extension of $A^\prime$. Furthermore $\widetilde{B}$ is a flat extension of $B$. Also $\widetilde{B}$ is a finite extension of $B$. As $B$ is Henselian we get that $\widetilde{B}$ is a direct product of local rings say 
$\widetilde{B} = B_1\times \cdots \times B_s$. We note that as $k$ is perfect we get that 
\[
\widetilde{B}\otimes_B B/\n \cong k[X]/(f(X))\otimes_k l \cong K_1 \times \cdots \times K_m
\]
where $K_1,\cdots, K_m$ are finite field extensions of $l$. We now note that $B_1$ is localization of $\widetilde{B}$ at a maximal ideal. As $l$ is perfect we get that $B_1$ is a finite separable extension of $B$. So $\n B$ is the maximal ideal of $B_1$, see \cite[2.5]{W}. Set $B^\prime = B_1$.

$(4)$.  We note that $\delta$ induces a flat map $\ov{\delta} \colon B/\m B \rt B^\prime/\m B^\prime$. We note that as $\m A^\prime = \m^\prime$ we get that 
$\m B^\prime = \m^\prime B^\prime$. Thus $\ov{\delta} \colon F \rt F^\prime$ is a flat map. As $\n B^\prime = \n^\prime$ we get that the fiber of $\ov{\delta}$ is a field. So if $F$ is regular then so is $F^\prime$, see \cite[23.7]{Mat}.
\end{proof}
\section{A description of  $\End_A(E)/ \rad \End_A(E)$}
In this section $(A,\m)$ is a Henselian ring and $E$ is a finitely generated $A$-module. Assume
$E = E_1^{a_1}\oplus \cdots \oplus E_{s}^{a_s}$ where $E_1, E_2,\cdots, E_s$ are mutually non-isomorphic indecomposable $A$-modules. We prove
\begin{theorem}\label{formula}
(with hypotheses as above) 
$$\frac{\End_A(E)}{\rad \End_A(E)} \cong M_{a_1}\left(\frac{\End_A(E_1)}{\rad \End_A(E_1)} \right)  \times \cdots \times  M_{a_s}\left(\frac{\End_A(E_s)}{\rad \End_A(E_s)} \right)$$
(here for a ring $\Gamma$, we denote by $M_n(\Gamma)$ the ring of $n\times n$ matrices over $\Gamma$.
\end{theorem}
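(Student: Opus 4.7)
My plan is to realize $R := \End_A(E)$ as a block ring via the orthogonal idempotents $e_1,\ldots,e_s \in R$ projecting $E$ onto its isotypic components $E_i^{a_i}$. Setting $\Gamma_i := \End_A(E_i)$ and $D_i := \Gamma_i/\rad \Gamma_i$: since $A$ is Henselian and $E_i$ is indecomposable, $\Gamma_i$ is local with residue division ring $D_i$, and one has
\[
e_iRe_i \cong M_{a_i}(\Gamma_i),\quad \rad(e_iRe_i) = M_{a_i}(\rad\Gamma_i),\quad e_iRe_j \cong \Hom_A(E_j^{a_j},E_i^{a_i})\ (i \neq j).
\]

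The decisive technical input is the containment
\[
(e_iRe_j)(e_jRe_i) \;\subseteq\; \rad(e_iRe_i) = M_{a_i}(\rad\Gamma_i) \qquad (i \neq j).
\]
An entry of such a product is a sum of compositions $E_i\xrightarrow{\alpha}E_j\xrightarrow{\beta}E_i$; if $\beta\alpha$ were a unit in the local ring $\Gamma_i$, then $\alpha$ would split, exhibiting $E_i$ as a direct summand of the indecomposable $E_j$ and forcing $E_i \cong E_j$, contrary to hypothesis.

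Using this, I would define a ring homomorphism
\[
\Psi \colon R \longrightarrow \prod_{i=1}^{s} M_{a_i}(D_i),\qquad \phi \longmapsto \bigl(\overline{e_i\phi e_i}\bigr)_i
\]
by reducing each diagonal block modulo $M_{a_i}(\rad\Gamma_i)$. The cross-term $\sum_{j \neq i}(e_i\phi e_j)(e_j\psi e_i)$ that arises in $e_i(\phi\psi)e_i$ is absorbed into $M_{a_i}(\rad\Gamma_i)$ by the decisive containment, so $\Psi$ is well-defined as a ring map; it is plainly surjective, with kernel $J := \{\phi \mid e_i\phi e_i \in M_{a_i}(\rad\Gamma_i)\text{ for all } i\}$. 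Semisimplicity of $R/J \cong \prod_i M_{a_i}(D_i)$ immediately yields $\rad R \subseteq J$.

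The main obstacle is the reverse inclusion $J \subseteq \rad R$. Using the Peirce decomposition $\rad R = \bigoplus_{i,j} e_i(\rad R)e_j$ together with the standard corner identification $e_i(\rad R)e_i = \rad(e_iRe_i) = M_{a_i}(\rad\Gamma_i)$, it is enough to establish $e_iRe_j \subseteq \rad R$ for all $i \neq j$. For any $x \in e_iRe_j$ and any $t \in R$, writing $xtx = e_ix \cdot (e_j t e_i) \cdot x e_j$ shows that $xtx \in (e_iRe_j)(e_jRe_i)(e_iRe_j) \subseteq \rad(e_iRe_i) \cdot (e_iRe_j) \subseteq \rad R$; hence $xRx \subseteq \rad R$, and consequently $(RxR)^2 \subseteq R \cdot (xRx) \cdot R \subseteq \rad R$. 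Since $\rad R$ is semi-prime (being the intersection of primitive, and hence prime, ideals), this forces $RxR \subseteq \rad R$, so $x \in \rad R$, completing the identification $J = \rad R$ and thereby the theorem.
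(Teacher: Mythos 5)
Your proof is correct and rests on the same key fact as the paper's own argument (Lemma 5.4 there): any composition $E_i \to E_j \to E_i$ through a non-isomorphic indecomposable $E_j$ is a non-unit in the local ring $\End_A(E_i)$, hence lies in its radical, so the candidate ideal has semisimple quotient and is contained in $\rad \End_A(E)$. The only real difference is in the packaging --- you handle all $s$ isotypic blocks simultaneously via the Peirce decomposition and finish with semiprimeness of the radical, whereas the paper inducts on the number of blocks and checks invertibility of $1-\phi\xi$ explicitly --- but the route is essentially the same.
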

\s Let $M,N$ be two $A$-modules. Then note that we have an isomorphism of rings
\[
\End_A(M\oplus N) \cong \begin{bmatrix}
\End_A(M) & \Hom_A(N,M) \\ \Hom_A(M,N) & \End_A(N)
\end{bmatrix}
\]

Clearly Theorem \ref{formula} follows from the following:
\begin{theorem}
\label{for-basic} Let $M, N$ be two $A$-modules
and let $M = M_1^{a_1}\oplus \cdots \oplus M_s^{a_s}$ and $N = N_1^{b_1}\oplus \cdots \oplus N_r^{b_r}$ with $M_i,N_j$ indecomposable. Assume $M_i \ncong N_j$ for all $i,j$. Also assume $M_i \ncong M_j$ for $i \neq j$ and $N_i \ncong N_j$ for $i \neq j$. Then
$$ \rad \End_A(M\oplus N)  =  \begin{bmatrix}
\rad \End_A(M) & \Hom_A(N,M) \\ \Hom_A(M,N) &  \rad\End_A(N)
\end{bmatrix}$$
\end{theorem}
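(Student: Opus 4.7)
Let $\Lambda = \End_A(M \oplus N)$, let $\mathcal{I}$ denote the matrix appearing on the right-hand side, and let $e_M, e_N \in \Lambda$ be the orthogonal idempotents projecting onto $M$ and $N$. Under the matrix identification of $\Lambda$ displayed above the theorem, $e_M\Lambda e_M = \End(M)$, $e_N\Lambda e_N = \End(N)$, $e_M\Lambda e_N = \Hom(N,M)$, and $e_N\Lambda e_M = \Hom(M,N)$. The plan is to prove $\rad\Lambda = \mathcal{I}$ by two mutual inclusions.

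For $\rad\Lambda \subseteq \mathcal{I}$ I would apply the standard identity $\rad(e\Lambda e) = e(\rad\Lambda)e$ (valid for any idempotent $e$ in any ring) to $e_M$ and $e_N$: the $(1,1)$ and $(2,2)$ blocks of any element of $\rad\Lambda$ land in $\rad\End(M)$ and $\rad\End(N)$ respectively, while the off-diagonal blocks trivially lie in the required Hom spaces.

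For $\mathcal{I} \subseteq \rad\Lambda$, the same identity covers the diagonal part: every $\alpha \in \rad\End(M) = e_M(\rad\Lambda)e_M$ equals $e_M r e_M$ for some $r \in \rad\Lambda$, and hence its image in $\Lambda$ lies in $\rad\Lambda$ since $\rad\Lambda$ is two-sided (symmetrically for the $(2,2)$ slot). The real content is therefore to show $e_M\Lambda e_N = \Hom(N,M) \subseteq \rad\Lambda$ and symmetrically $e_N\Lambda e_M \subseteq \rad\Lambda$. For this I would use that $A$ is Henselian, so $M \oplus N$ satisfies Krull--Schmidt with each indecomposable summand having local endomorphism ring; consequently $\Lambda$ is semiperfect. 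By \ref{basic-0} the quotient $\Lambda/\rad\Lambda$ is semisimple Artinian, and by the standard semiperfect-ring correspondence its Wedderburn factors are in canonical bijection with the isomorphism classes of indecomposable summands of $M \oplus N$, giving
\[
\Lambda/\rad\Lambda \;\cong\; \prod_{i=1}^{s} M_{a_i}(D_i) \;\times\; \prod_{j=1}^{r} M_{b_j}(E_j),
\]
with $D_i = \End(M_i)/\rad\End(M_i)$ and $E_j = \End(N_j)/\rad\End(N_j)$. The hypothesis $M_i \ncong N_j$ makes the classes $\{[M_i]\}$ and $\{[N_j]\}$ disjoint, so $\ov{e_M}$ and $\ov{e_N}$ are identities on complementary, disjoint sub-products of Wedderburn factors; in each factor one of the two vanishes, so $\ov{e_M}\cdot\ov{y}\cdot\ov{e_N} = 0$ for every $y \in \Lambda$, and hence $e_M\Lambda e_N \subseteq \rad\Lambda$.

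The main obstacle is precisely the identification of $\ov{e_M}$ and $\ov{e_N}$ as projectors onto disjoint Wedderburn factors of $\Lambda/\rad\Lambda$; this is the step that translates the module-theoretic hypothesis $M_i \ncong N_j$ into the ring-theoretic orthogonality needed, via the semiperfect-ring correspondence between primitive idempotents modulo the radical and isomorphism classes of indecomposable direct summands of $M \oplus N$.
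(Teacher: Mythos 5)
Your argument is correct, but it takes a genuinely different route from the paper's. The paper works entirely by hand: it first proves (Lemma~\ref{comp}, by induction on the number of factors) that any composite $g\circ f$ with $f\in\Hom_A(M,N)$, $g\in\Hom_A(N,M)$ lies in $\rad\End_A(M)$ --- the base case being that an invertible composite through a non-isomorphic indecomposable would force $M_i\cong N_j$ --- and then verifies quasi-regularity of each block of an element of $\q$ directly by exhibiting the inverse of $1-\phi\xi_i$, concluding via semisimplicity of $\End(M\oplus N)/\q$. You instead invoke two pieces of structure theory: the identity $\rad(e\Lambda e)=e(\rad\Lambda)e$, which cleanly handles both diagonal inclusions, and the semiperfect-ring correspondence between primitive idempotents modulo the radical and isomorphism classes of indecomposable summands, which yields $\ov{e_M}\,\ov{\Lambda}\,\ov{e_N}=0$ and hence $\Hom_A(N,M)\subseteq\rad\Lambda$. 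This is shorter and more conceptual, but note two things. First, the block decomposition of $\Lambda/\rad\Lambda$ you display is essentially Theorem~\ref{formula}, which the paper \emph{deduces from} the present statement; your argument is not circular only because the correspondence you cite is proved independently in standard references (Lam, Curtis--Reiner), and in fact you only need the weaker orthogonality statement that $e\Lambda f\subseteq\rad\Lambda$ for primitive idempotents $e,f$ with $e\Lambda\ncong f\Lambda$, together with the standard identification $e_{X_i}\Lambda\cong\Hom_A(X,X_i)$ showing that non-isomorphic summands give non-isomorphic projectives. You should state that reduction explicitly rather than asserting the full Wedderburn bijection as a black box. Second, the paper's elementary route has the advantage of being self-contained, since the quoted semiperfect machinery is, under the hood, the same induction on composites through non-isomorphic indecomposables that Lemma~\ref{comp} carries out.
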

We need the following 
\begin{lemma}\label{comp}
(with hypotheses as in \ref{for-basic}) Let $f \in \Hom_A(M,N)$ and let \\ $g \in \Hom_A(N,M)$. Then
$g\circ f \in \rad \End_A(M)$ and $f \circ g \in \rad \End_A(N)$.
\end{lemma}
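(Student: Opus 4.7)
Set $R := \End_A(M)$ and $r = gf$. The strategy is to show $gf \in \rad R$ by examining each matrix entry of $gf$ relative to a primitive-idempotent decomposition of $R$; a nontrivial entry will contradict the hypothesis via a standard split-monomorphism argument.

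Since $R$ is module-finite over the Henselian local ring $A$, it is semiperfect, so one has a decomposition $1 = \sum_p e_p$ into orthogonal primitive idempotents of $R$, with each $P_p := e_p M$ an indecomposable direct summand of $M$. By Krull--Schmidt applied to $M = \bigoplus_i M_i^{a_i}$, each $P_p$ is isomorphic to some $M_{i(p)}$, and $e_p R e_p = \End_A(P_p) \cong \End_A(M_{i(p)})$ is local. Since $\rad R$ is a two-sided ideal and $r = \sum_{p, q} e_p r e_q$, showing $gf \in \rad R$ reduces to showing $e_p (gf) e_q \in \rad R$ for every pair $(p, q)$. Two standard facts from semiperfect ring theory handle this at the entry level: (i) if $P_p \not\cong P_q$, then the entire Peirce block $e_p R e_q$ lies in $\rad R$ (since the corresponding component of $\bar R := R/\rad R$ vanishes, as $\bar e_p$ and $\bar e_q$ then sit in different Wedderburn blocks); (ii) if $P_p \cong P_q \cong M_i$, then $e_p R e_q \cong \End_A(M_i)$ and $e_p R e_q \cap \rad R$ corresponds to the maximal ideal $\rad \End_A(M_i)$ of this local ring.

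Case (i) is vacuous, so the only real work is case (ii). For such a pair $(p, q)$, the entry $e_p (gf) e_q$ is the composition
\[
M_i \cong P_q \hookrightarrow M \xrightarrow{f} N \xrightarrow{g} M \twoheadrightarrow P_p \cong M_i.
\]
If this were a unit of $\End_A(M_i)$, i.e., an automorphism, then $f|_{P_q}: M_i \to N$ would be a split monomorphism, with retraction obtained by composing $g$, the projection $M \twoheadrightarrow P_p$, and the inverse of the automorphism. This would exhibit $M_i$ as an indecomposable direct summand of $N$; by Krull--Schmidt on $N = \bigoplus_j N_j^{b_j}$, one would then get $M_i \cong N_j$ for some $j$, contradicting the hypothesis. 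Hence the entry is a non-unit, i.e., lies in $\rad \End_A(M_i)$, which gives $gf \in \rad R$.

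The main delicate point is the semiperfect infrastructure --- lifting of idempotents, the matching of primitive idempotents of $R$ with indecomposable summands of $M$, and the identification of $\rad R \cap e_p R e_q$ --- but these are all standard consequences of $R$ being module-finite over a Henselian local ring, and do not depend on the results being proved in this section. The companion claim $fg \in \rad \End_A(N)$ follows by interchanging the roles of $M$ and $N$.
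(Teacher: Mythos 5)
Your proof is correct, and the central contradiction is the same one the paper uses: if the induced composite $M_i \to N \to M_i$ were an automorphism, $M_i$ would split off $N$, violating $M_i \ncong N_j$. Where you diverge is in the reduction to that indecomposable-to-indecomposable situation. The paper gets there by an explicit induction on the number of distinct factors of $M$ (first reducing to $N$ indecomposable and $M = M_1^{a_1}$, then handling the inductive step by writing $g\circ f$ as a sum of matrix pieces $\xi_i$ and verifying by hand that each $1 - \phi\xi_i$ is invertible, together with $\rad M_n(\Lambda) = M_n(\rad \Lambda)$). You instead invoke the Peirce decomposition of the semiperfect ring $\End_A(M)$ with respect to a complete set of orthogonal primitive idempotents and the standard entrywise description of $\rad\End_A(M)$: the off-diagonal blocks between non-isomorphic summands lie entirely in the radical, and the blocks between isomorphic indecomposable summands meet the radical exactly in the non-units of the corresponding local endomorphism ring. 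This is cleaner and eliminates the induction, at the cost of importing those two facts from semiperfect ring theory; they are genuinely standard and independent of this lemma, but note that they are very close in content to Theorem \ref{formula} and Theorem \ref{for-basic} themselves (the paper is essentially re-deriving this structure theory from scratch in Section 5), so if one adopted your route one should cite the general theory (e.g.\ Lam or Curtis--Reiner) rather than anything in this section, exactly as you flag at the end. In short: your argument is a valid, more conceptual repackaging; the paper's is longer but self-contained.
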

\begin{proof}
We induct on $s$, the number of factors of $M$.

We first prove when $s =1$. 
So $M = M_1^{a_1}$. 
It suffices to prove the result  when   $N$ is indecomposable.  If  $ N = D_1 \oplus D_2$ and assume the result is known for $D_1$ and 
$D_2$.  Let $f \colon M \rt N$ and $g \colon N \rt M $. Write $f = (f_1,f_2)$ and $g = g_1 + g_2$ where $f_i \colon M \rt D_i$ and $g_i \colon D_i \rt M$ for $i = 1,2$. Then $g\circ f = g_1\circ f_1 + g_2 \circ f_2$. By our assumption,  $g_i\circ f_i \in \rad \End(M)$. So $g\circ f \in \rad \End(M)$. 

We first consider the case when $a_1 =1$. So let $f \colon M_1 \rt N$ and let $g \colon N \rt M_1$. If $g\circ f \notin \rad \End(M_1)$ then it is invertible as $M_1$ is indecomposable.  It follows that $M_1 \cong N$ (see the proof in \cite[Chapter X, Lemma 7.6]{Lang})
This is a contradiction.

Now assume $a_1  \geq 2$. Let $f \colon M_1^{a_1} \rt N$ and let $ g \colon N \rt  M_1^{a_1}$. Write $f = f_1 + \cdots+ f_{a_1}$ and $g = [g_1,\cdots,g_{a_1}]^{tr}$ where $f_i \colon M_1 \rt N$ and $g_i \colon N \rt M_1$ for all $i$. Then notice $g\circ  f  = [g_if_j]$. By the previous case we get that $g_if_j \in \rad \End(M_1)$. It follows that
\[
g\circ f \in M_{a_1}(\rad \End(M_1)) =  \rad (M_{a_1}(\End(M_1)))  = \rad \End (M^{a_1}_1).
\] 
(for the first equality above see \cite[p.\ 61]{Lam}.)

Assume the result for $s = c$. We prove it when $s = c +1$. 
Let $M = M_1^{a_1}\oplus \cdots \oplus M_{c}^{a_c}\oplus M_{c+1}^{a_{c+1}}$. Set $D = M_1^{a_1}\oplus \cdots \oplus M_{c}^{a_c}$. 
Then $M = D \oplus M^{a_{c+1}}_{c+1}$. Let $f \colon M \rt N$ and $g \colon N \rt M$. Write $f = [f_1,f_2]$ and $g = [g_1,g_2]^{tr}$ where $f_1 \colon D \rt N$ and $f_2 \colon M_{c+1}^{a_{c+1}} \rt N$ and $g_1 \colon N \rt D$ and $g_2 \colon N \rt M_{c+1}^{a_{c+1}}$. 
Then
\[
g \circ f = \begin{bmatrix}   
g_1\circ f_1 & g_1\circ f_2 \\ g_2 \circ f_1 & g_2\circ f_2
\end{bmatrix}
\] 
By induction hypotheses we have $g_1\circ f_1 \in \rad \End(D)$ and $g_2\circ f_2 \in \rad \End(M_{c+1}^{a_{c+1}})$.
Set
\[
\xi_1 =  \begin{bmatrix}   
g_1\circ f_1 &  0 \\  0 & 0
\end{bmatrix} 
\quad \text{and} \ 
\xi_2 =  \begin{bmatrix}   
0 & g_1\circ f_2 \\  0 & 0
\end{bmatrix}
\]

\[
\xi_3 =  \begin{bmatrix}   
0 & 0 \\ g_2 \circ f_1 & 0
\end{bmatrix}
\quad \text{and} \ 
\xi_4 =  \begin{bmatrix}   
0 & 0 \\  0 & g_2\circ f_2
\end{bmatrix}
\]
As $g\circ f = \xi_1 + \xi_2 + \xi_3 + \xi_4$ it suffices to show that $\xi_i \in  \rad \End(M)$ for each $i$.

Let 
$\phi  = [\phi_{ij}] \in \End(M)$. Then notice
\[
1 - \phi \xi_1 = \begin{bmatrix} 1 - \phi_{11} \circ g_1 \circ f_1 & 0 \\ -\phi_{21}\circ g_1 \circ f_1 & 1 \end{bmatrix}
\] 
Notice $\phi_{11} \circ g_1 \circ f_1  \rad \End(D)$. So
$1-\phi_{11} \circ g_1 \circ f_1 $ is invertible in $\End(D)$. It follows that $1 - \phi \xi_1$ is invertible. So $\xi_1 \in \rad \End(M)$. Similarly $\xi_4 \in \rad \End(M)$.

We now prove $\xi_2 \in \rad \End(M)$.  Set $\theta =  g_1 \circ f_2$. Notice
\[
1 - \phi \xi_2 = \begin{bmatrix}  1 &    - \phi_{11} \circ \theta \\  0 & 1 - \phi_{21} \circ \theta\end{bmatrix}
\]
Also note that $\phi_{21} \colon D \rt M_{c+1}^{a_{c+1}}$ and  $\theta \colon M_{c+1}^{a_{c+1}} \rt D$. So by induction hypotheses we have that $\phi_{21}\circ\theta  \in \rad \End(M_{c+1}^{a_{c+1}})$. It follows that $1 - \phi \xi_2 $ is invertible in  $ \End(M)$. So $\xi_2 \in \rad(\End(M))$. Similarly $\xi_3 \in \rad \End(M)$.
\end{proof}

We now give
\begin{proof} [Proof of Theorem \ref{for-basic}]
Set
$$ \q = \begin{bmatrix}
\rad \End_A(M) & \Hom_A(N,M) \\ \Hom_A(M,N) &  \rad\End_A(N)
\end{bmatrix}$$
We first prove that $\q$ is a two sided ideal contained in $\rad \End(M\oplus N)$.
Let $\xi = [\xi_{ij}] \in \q$. Let $\phi =  [\phi_{ij}] \in \End(M)$. Then
\[
\phi \circ \xi = \begin{bmatrix}    \phi_{11} \xi_{11} + \phi_{12}\xi_{21} & \phi_{11}\xi_{12} + \phi_{12} \xi_{22} \\ \phi_{21} \xi_{11} + \phi_{22} \xi_{21} & \phi_{21}\xi_{12} + \phi_{22} \xi_{22} \end{bmatrix}
\]
By Lemma \ref{comp} we have that $\phi_{12}\xi_{21}  \in \rad \End(M)$ and
$ \phi_{21}\xi_{12} \in \rad \End(N)$. It follows that $\phi \circ \xi \in \q$. Similarly $\xi \circ \phi \in \q$. Therefore $\q$ is an ideal in $\End(M\oplus N)$.

We now show that $\q \subseteq \rad \End(M\oplus N)$. Let $\xi = [\xi_{ij}] \in \q$.
Set
\[
\xi_1 =  \begin{bmatrix}   
\xi_{11} &  0 \\  0 & 0
\end{bmatrix} 
\quad \text{and} \ 
\xi_2 =  \begin{bmatrix}   
0 & \xi_{12} \\  0 & 0
\end{bmatrix}
\]

\[
\xi_3 =  \begin{bmatrix}   
0 & 0 \\ \xi_{21} & 0
\end{bmatrix}
\quad \text{and} \ 
\xi_4 =  \begin{bmatrix}   
0 & 0 \\  0 & \xi_{22}
\end{bmatrix}
\]
As $ \xi = \xi_1 + \xi_2 +\xi_3 + \xi_4$. It suffices to show that each $\xi_i \in \rad \End(M\oplus N)$. This is similar to the proof in \ref{comp}. 

As $\End(M \oplus N)/\q $ is semi-simple and $\q \subseteq \rad \End(M\oplus N)$ it follows that $\q = \rad \End(M\oplus N)$
\end{proof}

\section{Proof of Theorem \ref{main}}
In this section we give give a proof of Theorem \ref{main}. We restate it for the convenience of the reader.
\begin{theorem}\label{main-2}
Let $(A,\m) \rt (B,\n)$ be a flat local homomorphism  of Henselian  local rings with fiber $F = B/\m B$ regular local. Assume the residue fields $k = A/\m$ and $l = B/\n$ are perfect.
Let $M$ be a finitely generated $A$-module such that $\Gamma = \Hom_A(M,M)$ has finite global dimension. Then 
$$\gldim \Gamma \otimes_A B   \leq  \gldim \Gamma  + \dim F.$$ 
\end{theorem}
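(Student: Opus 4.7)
The plan is to kill the fiber by a regular sequence inside $B$ and pay a change-of-rings cost of one for each parameter killed. Set $t = \dim F$ and choose $y_1,\ldots,y_t \in \n$ lifting a regular system of parameters of $F$. Iterating the local criterion of flatness along $\bar y_1,\ldots,\bar y_t$ on the fiber $F$ shows that every quotient $B_i := B/(y_1,\ldots,y_i)$ is flat over $A$; in particular $y_1,\ldots,y_t$ is a $B$-regular sequence and $B_t$ is a Henselian local flat extension of $A$ with $\m B_t$ equal to its maximal ideal and residue field $l$.

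Put $\Lambda_i := \Gamma \otimes_A B_i$, so $\Lambda_0 = \Gamma \otimes_A B$ is the ring to be bounded, $\Lambda_i = \Lambda_0/(y_1,\ldots,y_i)\Lambda_0$ by right exactness, and $\Lambda_t = \Gamma \otimes_A B_t$. The element $y_{i+1}$ is central in $\Lambda_i$ (since $B$ is commutative); it is a nonzerodivisor on $\Lambda_i$, because tensoring $0 \to B_i \xrightarrow{y_{i+1}} B_i \to B_{i+1} \to 0$ with $\Gamma$ over $A$ yields a sequence whose left-hand Tor, $\Tor_1^A(\Gamma, B_{i+1})$, vanishes by the $A$-flatness of $B_{i+1}$; and it belongs to $\rad \Lambda_i$ by \ref{basic-0}. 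Moreover Corollary \ref{gdim-basic} applied to the flat local map $A \to B_t$ (whose hypothesis only requires $k$ perfect and $\m B_t$ to be the maximal ideal of $B_t$) gives $\gldim \Lambda_t = \gldim \Gamma$, which is finite by assumption.

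The finishing move is the classical change-of-rings inequality: for a central nonzerodivisor $y$ in the Jacobson radical of a Noetherian ring $R$ one has $\gldim R \leq \gldim R/yR + 1$. (This follows from $\pdim_R M = \pdim_{R/yR} M + 1$ for $R/yR$-modules of finite $(R/yR)$-projective dimension, together with $\gldim R = \pdim_R(R/\rad R)$.) Applying this $t$ times down the chain $\Lambda_0 \supseteq \Lambda_1 \supseteq \cdots \supseteq \Lambda_t$ gives
\[
\gldim(\Gamma \otimes_A B) \;=\; \gldim \Lambda_0 \;\leq\; \gldim \Lambda_t + t \;=\; \gldim \Gamma + \dim F,
\]
which is the desired bound.

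The delicate step is the non-commutative change-of-rings formula; it is classical but must be invoked with the correct hypotheses (central regularity and membership in the Jacobson radical), all of which are verified in the second paragraph. The remaining ingredients — iterated flatness of the $B_i$ via the local criterion, the vanishing of $\Tor_1^A(\Gamma, B_{i+1})$, and the applicability of Corollary \ref{gdim-basic} to $A \to B_t$ — are short local-ring checks, and neither the assumption that $B$ is Henselian nor that $l$ is perfect is needed for this particular argument (although both will be used elsewhere in the applications).
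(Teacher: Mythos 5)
Your proposal is correct, but it takes a genuinely different route from the paper's. The paper first reduces, via Lemma \ref{uuu} and the absolutely-indecomposable machinery of Sections 3--5, to the case where $M$ is a direct sum of absolutely indecomposable modules, so that $\Gamma/\rad\Gamma$ is a product of matrix rings over $k$; this is what allows it to identify $\rad(\Gamma\otimes_A B)$ explicitly as $\rad\Gamma\otimes B + (\bx)(\Gamma\otimes B)$ and then to resolve $(\Gamma\otimes B)/\rad(\Gamma\otimes B)$ by the Koszul complex $\K(\bx,B)\otimes_B((\Gamma/\rad\Gamma)\otimes B)$, reading off the bound from $\pdim$ of the terms. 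You never compute $\rad(\Gamma\otimes B)$ at all: you slice the fiber by the regular sequence $y_1,\ldots,y_t$ and apply the change-of-rings inequality $t$ times, landing on $A\to B_t$ where $\m B_t$ is the maximal ideal, so that Corollary \ref{gdim-basic} applies directly. Your three verifications for each step (centrality, regularity of $y_{i+1}$ on $\Lambda_i$ via vanishing of $\Tor_1^A(\Gamma,B_{i+1})$, and $y_{i+1}\in\rad\Lambda_i$ via \ref{basic-0}) are exactly what the noncommutative first/third change-of-rings theorems require, and both arguments share the remaining infrastructure, namely $\gldim\Lambda_i=\pdim_{\Lambda_i}(\Lambda_i/\rad\Lambda_i)$ for these semiperfect Noetherian rings and the flatness facts from Matsumura. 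What your route buys is substantial: Theorems \ref{split} and \ref{formula} and Lemma \ref{uuu} are not needed for Theorem \ref{main} at all, and the perfectness of $l$ is never used (only $k$ perfect enters, through \ref{gdim-basic}). One small caveat on your closing remark: the Henselian hypothesis is not entirely dispensable even in your argument, since the identity $\gldim\Lambda_i=\pdim(\Lambda_i/\rad\Lambda_i)$ rests on $\Lambda_i$ being semiperfect, which is what module-finiteness over the Henselian local ring $B_i$ guarantees; but as each $B_i$ is a quotient of the Henselian ring $B$, this costs nothing.
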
 
\begin{proof}
We first consider the case $M = M_1^{a_1}\oplus \cdots \oplus M_s^{a_s}$ where  $M_i$ are distinct absolutely indecomposable $A$-modules. 
Set $\rr = \rad \Gamma$ and $\q = \rad (\Gamma \otimes B)$. 
By \ref{basic-2} we get that $\rr \otimes B \subseteq \q$. 
 Let $x_1,\ldots x_c \in \n$ be such that their images in $F$ minimally generate the maximal ideal of $F$. By \ref{basic-0} we get that $(\mathbf{x})(\Gamma \otimes B) \subseteq \q$. 

By Theorem \ref{formula} we get that $\Gamma/\rr$ is a direct product of matrix rings over $k$. Therefore $(\Gamma/\rr) \otimes B$ is a direct product of matrix rings over $F$. It follows that $\q = \rr\otimes B  + (\mathbf{x})(\Gamma \otimes B)$.

By \cite[p.\ 177]{Mat}, $\bx$ is a $B$-regular sequence. Also note that $\bx \in Z(\Gamma \otimes B)$ the center of $\Gamma \otimes B$. Let $\K = \K(\bx, B)$ be the Koszul complex of $B$ \wrt \ $\bx$.  As $\Gamma/\rr \otimes B$ is a direct product of matrix rings over $F$ we get that 
$$\mathbf{C} = \K\otimes_B \left( \frac{\Gamma}{\rr} \otimes B \right)$$
is also acyclic with zeroth homology group $(\Gamma\otimes B)/\q$. 

We now note that $\pdim \mathbf{C}_i = \pdim \Gamma/\rr \otimes B$ for each module in the complex $\mathbf{C}$. Also note that 
$\pdim \Gamma/\rr \otimes B \leq \pdim \Gamma/\rr = \gldim \Gamma$, here the second equality holds since $\Gamma$ is semi-perfect with radical $\rr$. It follows that 
$\pdim  (\Gamma\otimes B)/\q \leq c + \gldim \Gamma$.  We note that $\Gamma \otimes B$ is semi-perfect with radical $\q$. 
So
\[
\gldim \Gamma\otimes B = \pdim (\Gamma\otimes B)/\q \leq c + \gldim \Gamma.
\]
Thus we have proved the result in this case. 

Now we consider the general case. By \ref{uuu} there exists a commutative diagram of flat local maps of Henselian rings
\[
\xymatrix{
A \ar@{->}[r]^{\eta}
     \ar@{->}[d]^\phi
&A^\prime 
     \ar@{->}[d]^{\phi^\prime}
\\
B \ar@{->}[r]^{\delta}
     &B^\prime      
}
\]
such that
\begin{enumerate}[\rm (1)]
\item
$(A^\prime,\m^\prime)$ is a finite flat extension of $A$ with $\m A^\prime = \m^\prime$. 
\item
$M\otimes A^\prime$ is a direct sum of absolutely indecomposable $A^\prime$-modules.
\item
 $(B^\prime,\n^\prime)$ is a finite flat extension of $B$ with $\n B^\prime = \n^\prime$. 
 \item
 The fiber $F^\prime$ of $\phi^\prime$ is also regular.
\end{enumerate}
Notice $\dim F^\prime = \dim F = c$.  Let $ t = \gldim \Gamma$. By \ref{gdim-basic} we have that $  \gldim \Gamma \otimes_A A^\prime = \gldim \Gamma = t.$ By our previous case we have that
\[
\gldim ( \Gamma \otimes_A A^\prime)\otimes_{A^\prime} B^\prime \leq  t + c.
\]
We now note that
\[
( \Gamma \otimes_A A^\prime)\otimes_{A^\prime} B^\prime  \cong \Gamma \otimes_A  B^\prime  \cong ( \Gamma \otimes_A B)\otimes_B B^\prime.
\]
By \ref{gdim-basic} we get that
\[
\gldim \Gamma \otimes_A B  =  \gldim ( \Gamma \otimes_A B)\otimes_B B^\prime \leq t + c.
\]
\end{proof}

\section{Proof of Theorems \ref{Ex-1} and \ref{Ex-2} }

We first give
\begin{proof}[Proof of Theorem \ref{Ex-1}]
We first note that if $L$ is a field of characteristic $0$ or $p$, the ring 
$L[X_1,\ldots,X_n]/(f)$ is regular.

(i) By \cite[p.\ 177]{Mat} we get that $T$ is a flat extension of $A$. So the map
$\phi \colon A \rt A(f)$ is  flat and local. Also note the fiber of $\phi$ is 
$k[X_1,\ldots,X_n]/(f)$ localized at a maximal ideal. In particular it is regular local. Thus $A(f)$ is  \CM. Furthermore $A(f)$ is Gorenstein if $A$ is.

(ii) Clearly $A(f)$ is excellent. The residue field of $A(f)$ is the residue field of $k[X_1,\ldots,X_n]/(f)$ localized at a maximal ideal. In particular it is a finite extension of $k$ and so it is perfect. It is also clear that $A(f)$ is of mixed characteristic. 

As $A(f)$ is local, to prove that it is a normal domain it suffices to prove it is normal. As $A(f)$ is \CM \ clearly it satisfies $S_2$. Also as $A(f)$ is catenary it suffices to show that $A(f)_P$ is regular for every prime ideal $P$ of height one.

Let  $P$ be a prime ideal in $A(f)$ of height one. Let $\q = P \cap A$. Note that we have a flat local map $\psi \colon A_\q \rt A(f)_P$. Let $F$ be the fiber of $\psi$. 
We now note that
\[
1 = \htt P = \dim A(f)_P = \dim A + \dim F = \htt \q + \dim F.
\]
Thus $\htt \q \leq 1$.

\textit{Case 1:} $\htt \q = 0$. So $\q =0$. Let $K$ be the quotient field of $A$. Then $A_\q = K$. Also $F$ is a localization of $K[X_1,\ldots,X_n]/(f)$ and so is regular. It follows that $A(f)_P$ is regular in this case.

\textit{Case 2:} $\htt \q = 1$. This implies that $\dim F = 0$. Let $\kappa(\q)$ be the 
the residue field of $A_\q$. Then note that $F$ is $\kappa(\q)[X_1,\ldots,X_n]/(f)$ localized at a minimal prime.  We note that $\kappa(\q)$ is either of characteristic zero or $p$. 
As observed earlier $\kappa(\q)[X_1,\ldots,X_n]/(f)$ is a regular ring. So $F$ is a field.  As $A$ is normal, $A_\q$ is regular. It follows that $A(f)_P$ is regular.

Thus $A(f)$ satisfies $R_1$. So $A(f)$ is normal. As $A(f)$ is local we get that $A(f)$ is a normal domain.  

(iii) Set $B = A(f)$.  Let $\Gamma = \End_A(M)$ be a NCCR of $A$. It is clear that $M\otimes_A B$ is a reflexive $A$-module and that $\Lambda = \Gamma \otimes B$ is maximal \CM \ as an $B$-module. We also note that $B$ has a canonical module, \cite[3.3.14]{BH} Thus it suffices to prove $\gldim \Lambda = \dim B$, \cite[2.17]{IW-I}. 
By \ref{gdim-basic} we may complete $B$. Thus it suffices to prove $\gldim \Lambda\otimes \widehat{B}  = \dim B$.  By \cite[22.4]{Mat}the map $\phi \colon A \rt  B$ extends to a flat map
$\widehat{\phi} \colon \widehat{A} \rt \widehat{B}$. Also note that
\[
\Lambda \otimes_B \widehat{B}  = (\Gamma\otimes_A \widehat{A})\otimes_{\widehat{A}} \widehat{B}.
\]
By Theorem \ref{main} we get that
\begin{align*}
\gldim \Lambda \otimes_B \widehat{B} &\leq \gldim \Gamma \otimes_A \widehat {A}  + \dim \widehat{B} - \dim \widehat{A} \\
&= \gldim \Gamma + \dim B - \dim A \\
&= \dim B
\end{align*}
Thus $\gldim \Lambda  \leq \dim B$, by 2.4. As $\Lambda$ is maximal \CM \ we always have $\gldim \Lambda \geq \dim B$, see \cite[F.1]{Leu}.
Thus $\gldim \Lambda = \dim B$. It follows that $\Lambda$ is a NCCR for $B$.
\end{proof}

\s  A proof of Theorem \ref{Ex-2} can be given along the same lines as above. The only thing to note that for any prime $\q$ in $A$ the residue field $\kappa(\q)$ of $A_\q$ contains $k$. So $\kappa(\q)[X_1,\ldots,X_n]/(f)$ is a regular ring.

\section{Proof of Theorem \ref{Ex-3}}
In this section we give
\begin{proof}[Proof of Theorem \ref{Ex-3}]
(i) Let $K$ be the quotient field of $A$.  We first prove that $f$ is irreducible in $K[X_1,\ldots,X_n]$.  Let $\ov{K}$ be the  algebraic closure of $K$. As $k \subseteq K$ we get that $\ov{k} \subseteq \ov{K}$. As $\widetilde{f}$ is smooth in $\mathbb{P}^n(\ov{k})$ we get that $\widetilde{f}$ and its partial derivatives do not have a common zero in $\mathbb{P}^n$.  Therefore $(X_0,X_1,\ldots,X_n) = \sqrt{(\widetilde{f}, J(\widetilde{f}))}$ in $\ov{k}[X_0,\ldots,X_n]$. It follows that $(X_0,X_1,\ldots,X_n) = \sqrt{(\widetilde{f}, J(\widetilde{f}))}$ in $\ov{K}[X_0,\ldots,X_n]$.  Therefore $\widetilde{f}$ is irreducible in $\ov{K}[X_0,X_1,\ldots,X_n]$. It follows that $f$ is irreducible in$\ov{K}[X_1,\ldots,X_n]$ and hence it is irreducible in $K[X_1,\ldots,X_n]$.

\textit{Claim 1:}  $T(f)$ is a domain.

We assert that $(f)K[X_1,\ldots,X_n] \cap A[X_1,\ldots,X_n] = (f)A[X_1,\ldots,X_n]$. If this assertion is proved then $T(f)$ will be a subring of $k[X_1,\ldots,X_n]/(f)$ and
so a domain.

Let $\xi \in (f)K[X_1,\ldots,X_n] \cap A[X_1,\ldots,X_n]$. Then $\xi = fg$ for some  \\ $g \in K[X_1,\ldots,X_n]$. Clearing denominators of $g$ we get that there exists $a \in A$ and $w \in
A[X_1,\ldots,X_n]$ such that $\xi a = f w$. We prove that $a$ divides all the coefficients of $w$. Let $P$ be a height one prime in $A$. Then $A_P$ is a DVR, so in particular a UFD.
Let  $R = A_P[X_1,\ldots,X_{n-1}]$ and let $L$ be the quotient field of $R$.  We may assume that  $X_n$ appears as a term in $f$. So $f \notin L$. Note $L$ is also the quotient field of $K[X_1,\ldots,X_{n-1}]$.  Note that by Gauss Lemma, $f$ is irreducible in $L[X_n]$ as the content of $f$ is one. So  again by Gauss Lemma we get that $f$ is irreducible in $R[X_n] = A_P[X_1,\ldots,X_n]$. It follows that if $c$ is a coefficient of $w$ then $(c/a)_P \in A_P$. But $A$ is a normal domain. So 
\[
\bigcap_{\htt  \q = 1} A_\q = A.
\] 
It follows that $c/a \in A$. Thus $\xi \in (f)A[X_1,\ldots,X_n]$.

By \cite[p.\ 177]{Mat}, $f$ is a non-zero divisor of $A[X_1,\ldots,X_n]$ and  $T(f)$ is a flat extension of $A$.  Also note that $\dim T(f) = \dim A + n -1$. Let $\n$ be a maximal ideal of $T(f)$. 

\textit{Claim-2} $\n \cap A$ is a maximal ideal of $A$. \\
Clearly $T(f)$ is an affine ring. Also by Claim-1 we have that $T(f)$ is a domain. Thus $\htt \n = \dim T(f) = \dim A + n -1$.  Let $P = \n \cap A$. We have a flat map 
$\psi \colon A_P \rt T(f)_\n$. Let $F$ be the fiber of $\psi$. Then note that $F$ is a localization of $\kappa(P)[X_1,\ldots,X_n]/(f)$.  As the content of $f$ is one we get that $f$ is a non-zero divisor in $\kappa(P)[X_1,\ldots,X_n]$. So $\dim F \leq n-1$. By the dimension formula for flat extensions it follows that $\htt P \geq \dim A$. So $P$ is a maximal ideal of $A$. 

\textit{Claim 3:} $T(f)$ is \CM. Also if $A$ is Gorenstein then so is $T(f)$.
 
 We first note that $k[X_1,\ldots,X_n]/(f)$ is a regular ring. 
 Let $\n$ be a maximal ideal of $T(f)$. Then  by Claim 2 we get that $\m = \n \cap A$ is a maximal ideal of $A$. We have a flat extension $\psi \colon A_\m \rt T(f)_\n$.  Note that $\kappa(\m) = A/\m$ is a finite extension of $k$. As $k$ is perfect  we get that $D = \kappa(\m)[X_1,\ldots,X_n]/(f)$ is regular. The fiber $F$ of $\psi$ is a localization of $D$ and so is regular. Thus $T(f)_\n$ is \CM \  and is Gorenstein if $A$ is. 

The assertion that $T(f)$ is normal follows exactly as in the argument in the proof of Theorem \ref{Ex-1}.

(ii) Let $\Gamma = \End_A(M)$ be a NCCR of $A$. Clearly $M\otimes T(f)$ is a reflexive $T(f)$-module. Furthermore $\Lambda = \Gamma\otimes_A T(f)$ is maximal \CM \ as a $T(f)$-module. The ring $T(f)$ has a canonical module \cite[18.21]{I-7}. So it suffices to prove that $\gldim \Gamma_\n = \dim T(f)_\n$ for all maximal ideals $\n$ of $T(f)$.
We note that $\n \cap A = \m$ a maximal ideal of $A$ (by claim 2). Also as $A$ and $T(f)$ are affine domains over $k$ we get that the residue fields of $\m$ and $\n$ are finite extensions of $k$ and so perfect. The proof of the assertion $\gldim \Gamma_\n = \dim T(f)_\n$ follows exactly as in the case of Theorem \ref{Ex-1}.
\end{proof}

\end{document}